\newtheorem{theorem}{Theorem}[section]
\newtheorem{corollary}[theorem]{Corollary}
\newtheorem{definition}[theorem]{Definition}
\newtheorem{lemma}[theorem]{Lemma}
\newtheorem{proposition}[theorem]{Proposition}
\newtheorem{remark}[theorem]{Remark}
\numberwithin{equation}{section}
\setlist[enumerate]{font = \normalfont}
	\renewcommand{\phi}{\varphi}
	\renewcommand{\bar}[1]{\overline{#1}}
\title[Global well-posedness for small data in a 3D coupled model with boundary noise]{Global well-posedness for small data in a 3D temperature-velocity model with Dirichlet boundary noise
  }
  \author{Gianmarco Del Sarto}
\address{Technische Universit\"{a}t Darmstadt\\
Fachbereich Mathematik\\
	Schlossgartenstr.\ 7\\
	64289 Darmstadt\\
	Germany\\
    }
\email{delsarto@mathematik.tu-darmstadt.de}
\author{Marta Lenzi}
\address{Scuola Normale Superiore\\
	Classe di Scienze\\
	P.za dei Cavalieri 7\\
	56126 Pisa\\
	Italy}
\email{marta.lenzi@sns.it}
   \address{University School for Advanced Studies IUSS Pavia \\
Department of Science, Technology and Society\\
P.za della Vittoria 15\\
27100 Pavia\\
Italy
}
\email{marta.lenzi@iusspavia.it}
\begin{document}

\keywords{Navier-Stokes Equations, Stochastic Boundary Conditions}
\subjclass{60H15, 60H30, 76D03}
\date\today

\begin{abstract}
We study a three-dimensional Boussinesq-type temperature-velocity system on a bounded smooth domain $\mathcal D\subset\mathbb R^3$, where the velocity $u^\varepsilon$ solves the Navier-Stokes equations and the temperature $\theta^\varepsilon$ is driven by Dirichlet boundary noise of intensity $\sqrt{\varepsilon}$. The boundary forcing produces a stochastic convolution $Z^\varepsilon$ which is, in general, only continuous in time with values in $H^{-\frac12-\delta_\theta}(\mathcal D)$. To handle this roughness together with initial data $\theta_0\in W^{s,6/5}(\mathcal D)$, we work in the ambient space $H^{-\frac12-\delta_u}(\mathcal D)$ with
$\delta_u\ge \max\{\delta_\theta,\frac12-s\}$.

Given a finite time $T>0$, for any $p>4$ and sufficiently small initial data, we prove existence and uniqueness of a mild solution $(u^\varepsilon,\theta^\varepsilon)$ up to a stopping time $\tau^\varepsilon\le T$ such that
$$
u^\varepsilon \in W^{1,p}(0,\tau^\varepsilon;H^{-\frac12-\delta_u}(\mathcal D)) \cap
L^p (0,\tau^\varepsilon;H^{\frac32-\delta_u}(\mathcal D)),
\quad
\theta^\varepsilon \in C(0,\tau^\varepsilon;H^{-\frac12-\delta_u}(\mathcal D)).
$$
Moreover, we obtain a high-probability global existence estimate of the form $\mathbb P(\tau^\varepsilon=T)\geq 1- C\varepsilon $, with $C= C( \delta_\theta, T)>0.$
\end{abstract}

\maketitle

\tableofcontents

\section{Introduction}
\label{sec: introduction}

Let $\mathcal{D}\subset\mathbb{R}^3$ be a bounded, open domain (i.e. non-empty, connected set) with smooth boundary $\partial\mathcal{D}$, and let $T>0$ be a fixed final time.  We study the coupled velocity-temperature system for an incompressible fluid in $\mathcal{D}$, modelled by Navier-Stokes equations with thermal advection-diffusion, and perturbed by Dirichlet boundary noise.  Concretely, for each $\varepsilon>0$ we consider
\begin{equation}
\left\{
\begin{aligned}
\partial_t u^\varepsilon + u^\varepsilon \cdot    \nabla u^\varepsilon + \nabla p^\varepsilon  - \Delta u^\varepsilon&=  - \theta^\varepsilon  e_3, \quad &\text{ in } &\mathcal{D} \times (0,T), \\
\mathrm{div}(u^\varepsilon) &= 0, \quad &\text{ in } &\mathcal{D} \times (0,T),\\
u^\varepsilon|_{\partial \mathcal{D} } &= 0, \quad &\text{ in } & \partial\mathcal{D} \times (0,T), \\
\partial_t \theta ^\varepsilon + u^\varepsilon \cdot \nabla \theta^\varepsilon -\Delta \theta ^\varepsilon &= 0, \quad &\text{ in } &\mathcal{D} \times (0,T),\\
\theta_{\mid \partial \mathcal{D} }^\varepsilon &= \sqrt{\varepsilon} \, \frac{dW}{dt}, \quad &\text{ in } & \partial\mathcal{D} \times (0,T), \\
u^\varepsilon|_{t = 0 } &= u_0, \quad \theta^\varepsilon|_{t = 0} =\theta_0. 
\end{aligned}
\right.
\label{eq: full system}
\end{equation}
where $u^\varepsilon\colon\mathcal{D}\times(0,T)\to\mathbb{R}^3$ is the fluid velocity, $\theta^\varepsilon\colon\mathcal{D}\times(0,T)\to\mathbb{R}$ is the temperature field, $p^\varepsilon \colon\mathcal{D}\times(0,T)\to\mathbb{R}$ is the pressure, $e_3=(0,0,1)^T$, and $(W_t)_{t\ge0}$ is a $\mathcal{Q}$-Wiener process acting only on the boundary with intensity which scales depending on the parameter $\varepsilon$.

The parameter $\varepsilon$ influences the probability of global existence up to time $T$.  Indeed, the three-dimensional (3D) Navier-Stokes equations are only known to be globally well‑posed under smallness conditions on initial data and forcing (see, e.g.,~\cite[Chapter 9]{Constantin1988}).  Since stochastic boundary forcing enters the velocity equation through buoyancy coupling, we impose a small‑noise regime in order to retain a nonzero probability of well‑posedness on $[0,T]$.  However, even for arbitrarily small $\varepsilon>0$, the boundary noise may assume large values with positive probability. Consequently, we construct the solution $(u^\varepsilon,\theta^\varepsilon)$ up to a random stopping time $\tau^\varepsilon\leq T$, defined to interrupt the evolution if the stochastic forcing becomes too large.

A key difficulty of the model comes from the fact that Dirichlet boundary noise is
much rougher than the forcing acting in the interior of the domain. Even in the linear heat equation with boundary noise, the stochastic
convolution produced by the boundary forcing typically lives only slightly below
the threshold $H^{-1/2}(\mathcal D)$. We quantify this loss through a small exponent
$\delta_\theta>0$, which measures how far the noise-driven component falls into
$H^{-1/2-\delta_\theta}(\mathcal D)$.

To couple this temperature field back into the three-dimensional Navier-Stokes
equation via the buoyancy term $-\theta^\varepsilon e_3$, we solve the fluid problem
in a (slightly) weaker Sobolev scale $H^{-1/2-\delta_u}(\mathcal D)$, chosen so that
\emph{both} the noise contribution and the initial temperature can be interpreted as
forcing terms at the fluid level. This leads to the compatibility requirement
\[
\delta_u \ge \max\{\delta_\theta,\ \tfrac12-s\},
\]
where $s\in(0,1/2)$ is the Sobolev regularity of the initial temperature
$\theta_0\in W^{s,6/5}(\mathcal D)$.

Under this condition (and for $p>4$ in the maximal-regularity framework), we prove the existence and uniqueness of the coupled system on the interval $[0,\tau^\varepsilon]$, satisfying
\[
u^\varepsilon  \in W^{1,p} (0, \tau^\varepsilon; H^{- \frac{1}{2}-\delta_u} (\mathcal{D})) \cap L^p (0,\tau^\varepsilon; H^{\frac{3}{2}- \delta_u} (\mathcal{D})), \quad \theta^\varepsilon \in C(0, \tau^ \varepsilon; H^{- \frac{1}{2}-\delta_u} (\mathcal{D})).
\]
Furthermore, we establish the high-probability estimate $\mathbb P(\tau^\varepsilon=T)\geq  1-C\varepsilon$. 

It is worth underlining two aspects of the boundary noise: its mathematical difficulties, and its physical meaning.  Firstly, Da Prato and Zabczyk were the first to rigorously demonstrate in \cite{DaPrato1993} that solutions to the heat equation subject to Dirichlet boundary noise exhibit significantly reduced regularity compared to solutions driven by interior stochastic forcing, even in one spatial dimension. Specifically, due to the presence of boundary noise, the best regularity that one can hope for, even in the linear case, is continuity in time with values in a negative-order Bessel potential space, namely $C(0,T;H^{-\frac{1}{2}-\delta_\theta} (\mathcal{D}))$ for any $\delta_\theta > 0$ and any final time $T>0$. This limitation reflects the difficulties of noisy Dirichlet boundary conditions.

The low spatial regularity of the stochastic convolution in the Dirichlet setting is a major obstruction for non-linear problems: even basic products and non-linear
maps may fail to be well-defined in a classical sense, and standard energy methods
or fixed-point arguments often require additional structure. As a consequence, the
available well-posedness theory for non-linear PDEs with Dirichlet white-noise
boundary data is rather limited and typically concerns only a limited class of non-linearities; see for instance \cite{FabbriGoldys,Maslowski,Sowers} and references therein.
Other works treat situations where the boundary forcing is regularised (e.g. coloured in
space and/or fractional in time) or otherwise smoothed at the boundary, which can lead to
better regularity and allow one to handle genuinely non-linear dynamics; see \cite{Blessing,Fan}. In our setting the boundary noise enters the temperature equation linearly, but the resulting
rough temperature acts as a forcing in the Navier-Stokes component, and its interplay with
the convective non-linearity is one of the main analytic difficulties. It is also worth noting that the $H^{-\frac{1}{2}-\delta_\theta }$ limitation is closely related to a
boundary-layer singularity: solutions to the associated linear Dirichlet problem may blow up
as one approaches $\partial\mathcal D$, while remaining smooth in the interior, see \cite{AlosBonaccorsi2002b,AlosBonaccorsi2002a,Brezniak2015,Goldys}.

In parallel, the velocity field $u^\varepsilon$ enjoys, in this context, the maximal regularity of the Stokes operator in the same low-regularity framework introduced above. This constitutes the optimal regularity according to the maximal regularity theory for the Stokes operator on $H^{-\frac{1}{2}-\delta_u}(\mathcal{D})$, with suitable divergence-free and boundary conditions on the spaces. Our maximal regularity approach follows the framework developed by Pr\"uss and Wilke in \cite[Section 5]{Pruss2018}.

Secondly, from a physical point of view, stochastic boundary forcing represents the effect of unresolved fast fluctuations, such as boundary-layer instabilities or small-scale convection, that cannot be described deterministically at the scale of our model. Related approaches that model boundary-layer effects through non-standard boundary mechanisms can be found, for instance, in \cite{BerselliRomito2006}. In climate science, this philosophy traces back to Hasselmann’s stochastic climate paradigm~\cite{Hasselmann1976}, in which slow, large-scale dynamics are driven by fast, random perturbations. The boundary-noise framework thus provides a mathematically tractable way to “close” the system. In this spirit, we plan to investigate in a future work in which sense the system \eqref{eq: full system} arises as a limit of a multiscale fast-slow model.

Lastly, a natural direction for future research is the analysis of the two-dimensional counterpart of the system. In the 2D setting, it is an interesting open problem to determine whether global well-posedness can be established on any time interval without relying on the stopping time $\tau^\varepsilon$ (i.e., independent of the noise intensity).

\section{Preliminaries and main results}
\label{sec: preliminaries and main result}
In this section we introduce the notation which will be used throughout the
paper, describe our approach to solve the coupled velocity-temperature problem \eqref{eq: full system}, and present the main results of our work.
\subsection{Notations and functional setting}
\label{subsec: notations and functional setting}
We work on a complete filtered probability space $\left(\Omega, \mathcal{F}, \left( \mathcal{F}_t \right)_t , \mathbb{P}\right)$. A stochastic process $\Phi$, taking values in a measurable space, is adapted if $\Phi_t$ is $\mathcal{F}_t$-measurable for any $t \geq 0$. It is progressively measurable if the map $(s, \omega) \mapsto \Phi_s(\omega)$ is measurable on $([0,t] \times \Omega, \mathcal{B}(0,t) \otimes \mathcal{F}_t  )$ for every $t \geq 0$, with $\mathcal{B}(0,t)$ being the Borel $\sigma$-algebra on $[0,t].$

Let $\mathcal{D}\subset \mathbb{R}^3$ be a bounded, open domain (i.e. non-empty, connected set) with smooth boundary. We denote by $(W_t)_t$ a $\mathcal{Q}$-Wiener process on $L^2(\partial \mathcal{D})$ defined on $\left(\Omega, \mathcal{F}, \left( \mathcal{F}_t \right)_t , \mathbb{P}\right)$, and represented by
\[
W(t,x) = \sum_{k \geq 0} \lambda_k e_k(x) \beta_k(t), \quad t \geq 0, \ x  \in \partial \mathcal{D}
\]
where $(e_k)_k \subset L^2 (\partial \mathcal{D})$ is an orthonormal basis of $L^2 (\partial \mathcal{D})$, $(\beta_k(t))_k$ are independent Brownian motions, and $(\lambda_k)_k$ are the non-negative square roots of the eigenvalues of the covariance operator $\mathcal{Q}$. For further details, we refer to \cite{DaPrato}. 

To handle the stochastic forcing on the boundary, we employ the \emph{Dirichlet map} $D$, following the approach of \cite{DaPrato1993}. It is defined as the linear map
\[
    D\colon L^2(\partial \mathcal{D}) \to L^2(\mathcal{D}), \quad D h := u,
\]
where $u$ denotes the unique weak solution to the Dirichlet problem 
\begin{equation*}
\left \lbrace
    \begin{aligned}
        \Delta u &= 0,  \quad &x \in &\mathcal{D} ,\\
        u|_{ \partial \mathcal{D}} & = h, \quad  \quad &x \in & \partial \mathcal{D}.
    \end{aligned}
    \right.
\end{equation*}

Regarding the functional setting, for $s \in \mathbb{R}$ and $p \in (1,\infty),$ we denote by $W^{s,p}(\mathcal{D})$ the fractional Sobolev space and by $H^{s,p}(\mathcal{D})$ the Bessel potential space. In particular, we set $H^{0,p}(\mathcal{D})=L^p(\mathcal{D})$, and write $H^{s}(\mathcal{D}) = H^{s,2}(\mathcal{D}) $. Note that $H^{s,p}(\mathcal{D}) = W^{s,p}(\mathcal{D})$ for any $s \in \mathbb{N}.$  

We denote by 
\begin{equation*}
\begin{aligned}
        \Delta \colon H^{2,2}(\mathcal{D}) \cap H^{1,2}_0 (\mathcal{D}) \subset L^2(\mathcal{D}) \to L^2(\mathcal{D})
\end{aligned}
\end{equation*}
the Dirichlet Laplacian, and we consider the fractional powers $(-\Delta)^\alpha$, for any $\alpha \in (0,1)$. Their domains are given by
\begin{equation*}
D((-\Delta) ^\alpha) := \prescript{}{0}{H}^{2\alpha} (\mathcal{D}) := \left \lbrace 
    \begin{aligned}
       & H^{2\alpha}(\mathcal{D}), &\quad &0 < \alpha < \frac{1}{4}, \\
       &H^{\frac{1}{2}}_{00}(\mathcal{D}), & \quad & \alpha = \frac{1}{4},\\
       & H^{2\alpha}_0 (\mathcal{D}), &\quad & \frac{1}{4} < \alpha \leq \frac{1}{2},\\
       & H^{2\alpha }(\mathcal{D}) \cap H^1_0 (\mathcal{D}), & \quad & \frac{1}{2}< \alpha < 1,
    \end{aligned}
    \right.
\end{equation*}
where $H^{\frac{1}{2}}_{00}(\mathcal{D})$ consists of all $u \in H^{\frac{1}{2}}(\mathcal{D})$ such that
\[
\int_{\mathcal{D}} \rho(x)^{-1} \abs{u(x)}^2  dx < \infty, 
\]
with $\rho(x)$ being any $C^\infty $ function comparable to $dist(x, \partial \mathcal{D})$; see \cite{Lions1972, Triebel} for more details on the domain of the fractional powers of the Dirichlet Laplacian.

We define the space of solenoidal (divergence-free) square-integrable vector fields by
\[
L^2_\sigma(\mathcal{D}) = \overline{\left \lbrace u \in C^\infty_c(\mathcal{D}; \mathbb{R}^3) \ : \ \mathrm{div}(u) = 0  \text{ in } \mathcal{D} \right \rbrace}^{\norm{\cdot}_{L^2(\mathcal{D})}}.
\]
The space $L^2(\mathcal{D})$ can be decomposed as
\[
L^2(\mathcal{D}) = L^2_\sigma (\mathcal{D}) \oplus G_2(\mathcal{D}),
\]
where $G_2(\mathcal{D}) := \left \lbrace u \in L^2(\mathcal{D}) \ : \ u = \nabla \pi,  \text{ for some } \pi \in H^{1,2}_{loc}(\mathcal{D}) \right \rbrace $, and there exists a unique projection $P\colon L^2(\mathcal{D}) \to L^2_{\sigma}(\mathcal{D})$ called the \emph{Helmholtz projection} in $L^2(\mathcal{D})$. We denote by $A$ the Stokes operator with Dirichlet boundary conditions, defined by
\[
Au:= - P \Delta u, \quad D(A):= H^{2,2}(\mathcal{D}) \cap H^{1,2}_0 (\mathcal{D}) \cap L^2_{\sigma}(\mathcal{D}).
\]
In the Hilbert space $X_0 = L^2_\sigma(\mathcal{D})$, the Stokes operator $A$
is a non-negative, self-adjoint operator with compact inverse; see, for
instance, \cite[Section~2]{TemamNSEs}. In particular, $A$ is sectorial of angle
$0$ and $-A$ generates a bounded analytic $C_0$-semigroup on $X_0$; see, for
example, \cite{EngelNagel2000,VeraarVolII}. Moreover, since $A$ is a
non-negative self-adjoint operator on a Hilbert space, it admits a bounded
$\mathcal{H}^\infty$-functional calculus of angle $0$; see
\cite[Chapters~2 and~7]{HaaseFunctionalCalculus}. In the specific case of the
Stokes operator, the domains of the fractional powers $A^\alpha$ in
$L^2_\sigma(\mathcal{D})$ have been identified in \cite{Giga1985} and are given
by
\[
D(A^\alpha)
  = \prescript{}{0}{H^{2\alpha}}(\mathcal{D}) \cap L^2_\sigma(\mathcal{D}),
  \qquad \alpha \in (0,1).
\]
For notational convenience, we introduce the following spaces:
\[
H^s_\sigma(\mathcal{D}) = H^s(\mathcal{D}) \cap L_{\sigma}^2(\mathcal{D}), \quad \prescript{}{0}{H}^s_\sigma(\mathcal{D})  = \prescript{}{0}{H^s(\mathcal{D})} \cap L^2_\sigma (\mathcal{D}), \quad s \geq 0,
\]
and, by duality
\[
H^{-s}_\sigma(\mathcal{D}) =  \left(H^{s}(\mathcal{D}) \cap L^2_\sigma (\mathcal{D}) \right)', \quad \prescript{}{0}{H}^{-s}_\sigma(\mathcal{D})  =  \left(\prescript{}{0}{H^{s}(\mathcal{D})} \cap L^2_\sigma (\mathcal{D})\right)', \quad s >0.
\]

We will work with an extension of the classical Stokes operator to a weaker setting, which we denote by $A_w$. Specifically, for any sufficiently small $\delta_u>0$, we define the \emph{weak Stokes operator} by
\begin{equation}
    \begin{aligned}
        A_w \colon \prescript{}{0}{H}^{\frac{3}{2}-\delta_u}_\sigma (\mathcal{D}) &\to \prescript{}{0}{H^{- \frac{1}{2}-\delta_u}_\sigma  (\mathcal{D})} \\
        \langle  A_w u, v \rangle &:=  \langle \nabla u, \nabla v \rangle_{{H^{\frac{1}{2}-\delta_u } }, {H^{- \frac{1}{2}+ \delta_u} }}, 
    \end{aligned}
    \label{eq: definition weak stokes operator}
\end{equation}
for all pairs $(u,v) \in \prescript{}{0}{H}^{\frac{3}{2}-\delta_u}_\sigma (\mathcal{D}) \times  \prescript{}{0}{H^{ \frac{1}{2}+\delta_u}_\sigma  (\mathcal{D})} $. Here, $\langle \cdot, \cdot \rangle_{{H^{\frac{1}{2}-\delta_u } }, {H^{- \frac{1}{2}+ \delta_u} }}$ denotes the duality pairing between ${H^{\frac{1}{2}-\delta_u} (\mathcal{D})}$ and ${H^{-\frac{1}{2} + \delta_u} (\mathcal{D})}$.

Lastly, given a final time $T>0$ and $p \in (1,\infty)$, we set
\[
\mathbb{E}_{t,p}^{\delta_u} := W^{1,p}(0,t; \prescript{}{0}H^{-\frac{1}{2}-\delta_u }_\sigma (\mathcal{D})) \cap L^p(0,t; \prescript{}{0} H^{\frac{3}{2}-\delta_u}_\sigma  (\mathcal{D})),
\]
for any $t \in [0,T]$. This space will be the maximal regularity space on which the velocity of our model evolves. Additionally, we define the real interpolation space
\[
V_{ p}^{\delta_u} = \left( \prescript{}{0}H^{-\frac{1}{2}-\delta_u }_\sigma (\mathcal{D}), \prescript{}{0} H^{\frac{3}{2}-\delta_u}_\sigma  (\mathcal{D}) \right)_{1-\frac{1}{p}, p}.
\]
For more information on these function spaces, we refer the reader to \cite{Lions1972, Triebel,Lunardi2018}.

\subsection{Description of our approach}
The approach we adopt to study model \eqref{eq: full system} is the following. First, we decouple the model \eqref{eq: full system} into a system for temperature only, given a prescribed fluid velocity; and a system for velocity only, given a prescribed temperature. Namely, we study first the equation for the temperature $\theta^\varepsilon \colon \mathcal{D} \times (0,T) \to \mathbb{R}$, which reads as
\begin{equation}
\left\{
\begin{aligned}
\partial_t  \theta^\varepsilon + u \cdot \nabla \theta^\varepsilon  -\Delta \theta ^\varepsilon &= 0, \quad &\text{ in } &\mathcal{D} \times (0,T),\\
\theta ^\varepsilon|_{ \partial \mathcal{D} } &= \sqrt{\varepsilon}  \frac{dW}{dt}, \quad &\text{ in } & \partial\mathcal{D} \times (0,T), \\
\theta^\varepsilon|_{t = 0} &=\theta_0, \quad &\text{ in } & \mathcal{D}.
\end{aligned}
\right.
\label{eq: temperature for given velocity}
\end{equation}
Here, $u \colon \mathcal{D} \times (0,T) \to \mathbb{R}^3$ is a given velocity and $\theta_0$ is the initial condition. Second, we study the 3D Navier-Stokes equations for the velocity $u \colon \mathcal{D} \times (0,T) \to \mathbb{R}^3$ given by
\begin{equation}
\left\{
\begin{aligned}
\partial_t u + u \cdot    \nabla u + \nabla p  - \Delta u&=   f^\theta, \quad &\text{ in } &\mathcal{D} \times (0,T), \\
\mathrm{div}(u) &= 0, \quad &\text{ in } &\mathcal{D} \times (0,T),\\
u|_{\partial \mathcal{D} } &= 0, \quad &\text{ in } & \partial\mathcal{D} \times (0,T), \\
u|_{t = 0 } &= u_0, \quad &\text{ in } & \mathcal{D}.
\end{aligned}
\right.
\label{eq: 3D NSEs velocity only}
\end{equation}
Here, $f^\theta \colon \mathcal{D} \times (0,T) \to \mathbb{R}^3$ is a prescribed forcing, $p\colon \mathcal{D} \times (0,T) \to \mathbb{R}$ is the pressure, and $u_0$ denotes the initial condition.

Then, following the idea in \cite{DaPrato02,Luongo2024}, we split the analysis of \eqref{eq: temperature for given velocity} into two problems, corresponding to the decomposition $\theta_t^\varepsilon = Z_t^\varepsilon + \zeta_t^\varepsilon.$ The first term $Z^\varepsilon_t$ corresponds to the stochastic linear problem with non-homogeneous Dirichlet boundary conditions given by 
\begin{equation}
\left\{
\begin{aligned}
\partial_t Z ^\varepsilon & = \Delta Z ^\varepsilon, \quad &\text{ in } & \mathcal{D} \times(0,T), \\
Z^\varepsilon|_{\partial \mathcal{D}} & = \sqrt{\varepsilon} \frac{dW}{dt}, \quad &\text{ in } & \partial \mathcal{D} \times(0,T), \\
Z^\varepsilon|_{t = 0} & = 0,  \quad &\text{ in } &  \mathcal{D}.
\end{aligned}
\right.
\label{eq: main results stochastic heat with noise on the boundary}
\end{equation}
The solution of the previous equation can be interpreted in mild form, as in \cite{DaPrato1993,Goldys}. Next, we consider the remainder $\zeta_t^\varepsilon:= \theta_t^\varepsilon - Z_t^\varepsilon$, which satisfies
\begin{equation}
\left\{
\begin{aligned}
\partial_t \zeta^\varepsilon + u \cdot \nabla \zeta^\varepsilon + u \cdot \nabla Z^\varepsilon - \Delta \zeta ^\varepsilon  & = 0, \quad &\text{ in } & \mathcal{D} \times(0,T), \\
\zeta ^\varepsilon|_{\partial \mathcal{D}} & = 0, \quad &\text{ in } & \partial \mathcal{D} \times(0,T), \\
\zeta ^\varepsilon|_{t = 0} & = \theta_0,  \quad &\text{ in } &  \mathcal{D}.
\end{aligned}
\right.
\label{eq: zeta temperature only}
\end{equation}
Here, we interpret again the solution $\zeta^\varepsilon \colon \mathcal{D} \times (0,T) \to \mathbb{R}$ in the mild form, namely
\[
\zeta_{t}^\varepsilon=e^{t\Delta }\theta _{0}-\int_{0}^{t}e^{\left(  t-r\right)  \Delta}\left(
u_{r}\cdot\nabla\zeta_{r}^\varepsilon\right)  dr-\int_{0}^{t}e^{\left(  t-r\right)
\Delta}\left(  u_{r}\cdot\nabla Z_{r}^\varepsilon\right)  dr.
\]
We conclude this section by describing the solution of the linear temperature problem associated to \eqref{eq: temperature for given velocity}, i.e. by recalling the following result concerning the mild solution of \eqref{eq: main results stochastic heat with noise on the boundary}, which follows from \cite{DaPrato1993} and the factorisation trick for the stochastic convolution \cite[Section 5.3.1]{DaPrato}.
\begin{proposition}
\label{prop: regularity for Z_t}
For any $\delta_\theta  >0$, fix 
\[
    \alpha_\theta  = \frac{1}{4}+ \frac{\delta_\theta }{2}, \quad \beta_\theta  = \frac{1}{4}- \frac{\delta_\theta}{4}.
\]
Assume that
\begin{equation}
    \sum_{k} \lambda_k^2 \norm{(-\Delta)^{\beta_\theta} De_k}_2^2 < \infty.
    \label{eq: convergence series eigenvalue noise for continuity xi}
    \end{equation}
    The following holds.
    \begin{enumerate}
        \item[(i)]  The process $Z^\varepsilon_t = - \sqrt{\varepsilon} \int_0^t \Delta e^{(t-r)\Delta} D dW_r$ is the unique $\left(\mathcal{F}_t\right)_t$-adapted mild solution of \eqref{eq: main results stochastic heat with noise on the boundary}, and takes values in $H^{- 2\alpha_\theta }(\mathcal{D})$.
        \item[(ii)] The process
    \begin{equation}
    \xi^\varepsilon_t := (-\Delta) ^{-\alpha_\theta } Z^\varepsilon_t =  \sqrt{\varepsilon} \int_0^t (-\Delta)^{1- \alpha_\theta } e^{(t-r)\Delta}D dW_r,
        \label{eq: definition of xi_t}
    \end{equation}
    has a continuous version in $L^2 (\mathcal{D})$ and $Z^\varepsilon$ has a continuous version in $H^{-2\alpha_\theta }(\mathcal{D})$.
    \item[(iii)] Let $T>0$. For any $\vartheta>0$, there exists a positive constant $C = C(\delta_\theta , T)$ such that
    \[
    \mathbb{P}\left( \sup_{0 \leq t \leq T} \norm{Z_t^\varepsilon}_{H^{-2\alpha_\theta}(\mathcal{D})}> \vartheta  \right) \leq \frac{\varepsilon}{\vartheta^2}  C \sum_k \lambda_k^2 \norm{(-\Delta)^{\beta_\theta  }D e_k}_2^2.
    \]
    \end{enumerate}
\end{proposition}
We remark that point (iii) of the previous result is key since it controls, depending on the noise intensity $\sqrt{\varepsilon}$, the probability of the sup norm of $Z_t^\varepsilon$ to be small in an appropriate Bessel potential space.

\subsection{Main results}
We are now in a position to state the main results of our work. For this purpose, we begin by introducing some notation.

Let $p>4$ and $s\in(0,\tfrac12)$. Choose
\begin{equation}
\gamma:=\frac{1}{4} + \varepsilon_\gamma, 
\qquad 
0<\varepsilon_\gamma<\min \left\lbrace 
\frac{1-2s}{4},\ \frac{(1-2s)(p-4)}{4p}
\right \rbrace,
\label{eq:param-gamma}
\end{equation}
and set $a(s,\gamma):=\frac{1}{2}+\gamma+\frac{s}{2}$ and
$\lambda_{\max}:=p(1-a(s,\gamma))$. Then
\[
a(s,\gamma)=\frac{3}{4}+\frac{s}{2}+\varepsilon_\gamma<1
\quad\text{(since }\varepsilon_\gamma<\tfrac{1-2s}{4}\text{)},
\]
and, using $1-a(s,\gamma)=\frac{1-2s}{4}-\varepsilon_\gamma$,
\begin{equation}
\lambda_{\max}-\left(\frac{1}{2}-s\right)
=p\left(\frac{1-2s}{4}-\varepsilon_\gamma \right)  -  \frac{1-2s}{2}
= \frac{(1-2s)(p-2)}{4}- p \varepsilon_\gamma
>0,
\label{eq:lmax-lmin}
\end{equation}
so $\lambda_{\max}>\frac12-s$. Moreover,
\begin{equation}
\lambda_{\max}-(1-2s)
= p\left(\frac{1-2s}{4}-\varepsilon_\gamma \right)-(1-2s)
= \frac{(1-2s)(p-4)}{4}-p\varepsilon_\gamma>0,
\label{eq: lambda max minus 12s positive}
\end{equation}
by \eqref{eq:param-gamma}. Hence the interval
$(0,\min\{2s,\ \lambda_{\max}-(1-2s)\})$ is non-empty, and we may choose
\begin{equation}
\lambda:=\left(\frac{1}{2}-s\right)+\varepsilon_\lambda, \qquad
0<\varepsilon_\lambda<\min\left \lbrace2s, \lambda_{\max}-(1-2s)\right \rbrace.
\label{eq:param-lambda}
\end{equation}
In particular, $\lambda<\frac12+s<1$ (since $\varepsilon_\lambda<2s$).
Finally, define the thresholds
\begin{equation}
\bar{\delta}_\theta:=\min\left \lbrace \lambda, 2\gamma-\frac{1}{2}\right \rbrace, \qquad \bar{\delta}_u:=\min\left \lbrace \lambda_{\max}-\lambda, 1-\lambda\right \rbrace .
    \label{eq:param-bar-deltas}
\end{equation}

The first main result concerns the well-posedness of the system \eqref{eq: zeta temperature only}
\begin{theorem}
Let $T>0,\, s\in(0,\tfrac{1}{2})$ and $p>4$, and choose $\gamma,\lambda$ as in \eqref{eq:param-gamma} and \eqref{eq:param-lambda}. Let $\bar\delta_\theta(s,p)$ and
$\bar\delta_u(s,p)$ be given by \eqref{eq:param-bar-deltas}. Then for every
\[
0<\delta_\theta < \bar\delta_\theta(s,p),
\qquad
0<\delta_u<\bar\delta_u(s,p),
\]
the following holds. Set
\[
\alpha_\theta=\frac{1}{4}+\frac{\delta_\theta}{2},\qquad
\beta_\theta=\frac{1}{4}-\frac{\delta_\theta}{4},
\]
and let $Z_t^\varepsilon=-\sqrt{\varepsilon}\int_0^t \Delta e^{(t-r)\Delta}D\,dW_r$
be the continuous version in $H^{-2\alpha_\theta}(\mathcal D)$ from
Proposition \ref{prop: regularity for Z_t}. Then, for every divergence-free velocity field
\[
u \in L^p\bigl(0,T;\prescript{}{0}H^{\frac32-\delta_u}_\sigma(\mathcal D)\bigr)
\cap L^\infty\bigl(0,T;\prescript{}{0}H^{\frac12-\delta_u}_\sigma(\mathcal D)\bigr)
\]
and every initial datum $\theta_0\in W^{s,6/5}(\mathcal D)$, there exists a unique mild
solution $\zeta^\varepsilon$ of \eqref{eq: zeta temperature only} of the form
\[
\zeta_t^\varepsilon = e^{t\Delta}\theta_0
  -\int_0^t e^{(t-r)\Delta}\bigl(u_r\cdot\nabla\zeta_r^\varepsilon\bigr)\,dr
  -\int_0^t e^{(t-r)\Delta}\bigl(u_r\cdot\nabla Z_r^\varepsilon\bigr)\,dr,
\]
with trajectories in
\[
\zeta^\varepsilon\in C(0,T;W^{s,6/5}(\mathcal D))
\qquad \mathbb{P}\text{-a.s.}
\]
\label{thm: existence mild solution zeta}
\end{theorem}

A straightforward but important consequence of the proof of the previous result is the following estimate.
\begin{corollary}
Under the assumptions of Theorem \ref{thm: existence mild solution zeta}, let $\zeta \in C(0,T; W^{s, \frac{6}{5}}(\mathcal{D}))$ and define
    \[
    \tilde{\zeta}_t:= e^{t\Delta}\theta_{0}-\int_{0}^{t}e^{\left(  t-r\right)  \Delta}\left(u_{r}\cdot\nabla\zeta_{r}\right)  dr- \int_{0}^{t}e^{\left(  t-r\right)  \Delta}\left(  u_{r}%
\cdot\nabla Z_{r}^\varepsilon\right)  dr,
    \]
    Then, for any $0 < \lambda < 1-\delta_u$, it holds
    \begin{equation}
        \begin{split}
            \norm{\tilde{\zeta}}_{C(0,T; W^{s, \frac{6}{5}}(\mathcal{D}))} &\leq  \norm{e^{t\Delta }\theta_0}_{C(0,T; W^{s, \frac{6}{5}}(\mathcal{D}))} \\
            &\quad + C \norm{u}_{L^{\frac{p}{\lambda + \delta_u }}(0,T; W^{\frac{1}{2}+\lambda,2}(\mathcal{D}))}\left( \norm{\zeta}_{C(0,T; W^{s, \frac{6}{5}}(\mathcal{D}))} + \norm{\xi^\varepsilon}_{C(0,T; L^2(\mathcal{D}))}\right),
        \end{split}
        \label{eq: estimate tilde tau}
    \end{equation}
    where $C = C(p,\lambda, \delta_u,\delta_\theta, s,T)>0$.
    \label{cor: estimate tilde zeta}
\end{corollary}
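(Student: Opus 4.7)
The plan is to revisit the linear bounds that underlie the contraction argument used in the proof of Theorem~\ref{thm: existence mild solution zeta}. I would start by applying the triangle inequality to the defining formula for $\tilde\zeta_t$, separating the three contributions: the initial-data piece $e^{t\Delta}\theta_0$, which is kept on the right-hand side of \eqref{eq: estimate tilde tau}, and the two Duhamel integrals
\[
I_\zeta(t) := \int_0^t e^{(t-\tau)\Delta}\bigl(u_\tau\cdot\nabla\zeta_\tau\bigr)\,d\tau,
\qquad
I_Z(t) := \int_0^t e^{(t-\tau)\Delta}\bigl(u_\tau\cdot\nabla Z^\varepsilon_\tau\bigr)\,d\tau.
\]

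For $I_\zeta$, I would first invoke the divergence-free condition on $u$ to rewrite the integrand as $\mathrm{div}(u_\tau\zeta_\tau)$, then apply a smoothing estimate of the form $\norm{e^{t\Delta}\mathrm{div}(F)}_{W^{s,6/5}(\mathcal{D})}\leq C\,t^{-\gamma}\norm{F}_{L^r(\mathcal{D})}$ for a suitable exponent pair $(r,\gamma)$ with $\gamma<1$. The product $u_\tau\zeta_\tau$ is then controlled in $L^r$ via Hölder's inequality, combining the Sobolev embedding $W^{1/2+\lambda,2}(\mathcal{D})\hookrightarrow L^{3/(1-\lambda)}(\mathcal{D})$ with a Lebesgue embedding for $W^{s,6/5}(\mathcal{D})$. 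Hölder's inequality in time, with the conjugate pair arranged so that the kernel singularity $(t-\tau)^{-\gamma}$ is integrated against $u\in L^{p/(\lambda+\delta)}_t$, then yields
\[
\norm{I_\zeta}_{C(0,T;W^{s,6/5}(\mathcal{D}))}\leq C\,\norm{u}_{L^{p/(\lambda+\delta)}(0,T;W^{1/2+\lambda,2}(\mathcal{D}))}\norm{\zeta}_{C(0,T;W^{s,6/5}(\mathcal{D}))}.
\]

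The term $I_Z$ is treated analogously after writing $u_\tau\cdot\nabla Z^\varepsilon_\tau = \mathrm{div}(u_\tau Z^\varepsilon_\tau)$, understood as a distribution. The complication here is that $Z^\varepsilon$ is only a distribution in $H^{-1/2-\delta}(\mathcal{D})$. Using Proposition~\ref{prop: regularity for Z_t}, I would substitute $Z^\varepsilon_\tau = \Delta^\alpha\xi^\varepsilon_\tau$ with $\alpha = \tfrac14+\tfrac{\delta}{2}$ and interpret the product $u_\tau\Delta^\alpha\xi^\varepsilon_\tau$ by transferring the fractional Laplacian onto test functions via duality; the heat semigroup then absorbs $2\alpha = \tfrac12+\delta$ additional derivatives, producing a kernel with an integrable time singularity $(t-\tau)^{-\gamma'}$, $\gamma'<1$. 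The same Hölder-in-time argument leads to the companion bound with $\norm{\xi^\varepsilon}_{C(0,T;L^2(\mathcal{D}))}$ in place of $\norm{\zeta}_{C(0,T;W^{s,6/5}(\mathcal{D}))}$.

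The main obstacle I expect is keeping all the exponents consistent simultaneously. The product $u_\tau Z^\varepsilon_\tau$ lives only as a distribution, so defining it requires enough positive regularity on $u$ to compensate the negative regularity of $Z^\varepsilon$; this is precisely where the constraint $\lambda<1-\delta$ and the appearance of the combination $\lambda+\delta$ in the time exponent become sharp, since they arise from balancing the heat-kernel singularity against the Hölder exponents in both the spatial multiplier estimate and the time integration. Once these exponents are tracked, summing the three contributions by the triangle inequality yields~\eqref{eq: estimate tilde tau}.
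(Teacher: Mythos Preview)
Your proposal is correct and follows essentially the same approach as the paper. The paper's proof (given as a remark after the proof of Theorem~\ref{thm: existence mild solution zeta}) simply observes that the corollary follows from the bounds already established: the estimate \eqref{eq: estimate norm xi} in Corollary~\ref{cor: csi continuo} handles your $I_Z$, and the estimate \eqref{eq: bound norm u tau in Lp step one fixed point tau} in Step~1 of the proof of Theorem~\ref{thm: existence mild solution zeta} handles your $I_\zeta$; both rest on exactly the ingredients you list (divergence-free rewriting, fractional heat smoothing, H\"older in time against $L^{p/(\lambda+\delta)}$, and for $I_Z$ the substitution $Z^\varepsilon=\Delta^\alpha\xi^\varepsilon$ with duality). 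The only cosmetic difference is that the paper controls the spatial products via duality and the Behzadan multiplication theorem rather than your direct Sobolev-embedding/H\"older route, but the two are interchangeable here.
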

So far, in Proposition \ref{prop: regularity for Z_t}, Theorem \ref{thm: existence mild solution zeta}, and Corollary \ref{cor: estimate tilde zeta}, we have investigated the well-posedness of the temperature equation \eqref{eq: temperature for given velocity}. Moreover, we can control the norm of its solution $\theta^\varepsilon = Z^\varepsilon + \zeta^\varepsilon$ with high probability depending on $\varepsilon$, since we are able to estimate the norms of $Z^\varepsilon$ (by Proposition \ref{prop: regularity for Z_t}) and $\zeta^\varepsilon$ (by Corollary \ref{cor: estimate tilde zeta}).

We now turn to the analysis of the 3D Navier-Stokes equations \eqref{eq: 3D NSEs velocity only}, assuming that $f^\theta$ is a prescribed forcing term. Global well-posedness results for the 3D Navier--Stokes equations are well known under smallness assumptions on the initial datum and the forcing; see for instance \cite[Chapter 9]{Constantin1988}. In the classical energy setting one typically assumes $f^\theta\in L^2(0,T;L^2(\mathcal D))$ and $u_0\in H^1(\mathcal D)$.

In our coupled model the forcing is of buoyancy type, $f^\theta=-\theta^\varepsilon e_3$, and the temperature is decomposed as
$\theta^\varepsilon=Z^\varepsilon+\zeta^\varepsilon$. The noise component $Z^\varepsilon$ is, in general, only continuous in time with values in
$H^{-\frac12-\delta_\theta}(\mathcal D)$, for any $\delta_\theta>0$ compatible with the covariance summability condition \eqref{eq: convergence series eigenvalue noise for continuity xi}; the remainder $\zeta^\varepsilon$ is controlled in $W^{s,6/5}(\mathcal D)$. We therefore work in the ambient space
$H^{-\frac12-\delta_u}(\mathcal D)$ for the Stokes/Navier-Stokes maximal regularity theory, choosing $\delta_u$ so that 
\[
\delta_u \geq \max\{\delta_\theta,\ \tfrac12-s\},
\]
which ensures that both $Z^\varepsilon$ and $\zeta^\varepsilon$ can be interpreted as forcing terms in $H^{-\frac12-\delta_u}(\mathcal D)$. Accordingly, we formulate the Navier-Stokes well-posedness result for general forcing
\[
f^\theta \in L^p (0,T;H^{-\frac12-\delta_u}(\mathcal D) ), \quad  p>\frac{2}{1-\delta_u},
\]
which is the natural class arising from the maximal regularity framework in the low-regularity setting.

The notion of solution considered for the 3D Navier-Stokes problem \eqref{eq: 3D NSEs velocity only} is the one of mild solution in the weak setting determined by the weak Stokes operator $A_w$. We now formalise this notion.
\begin{definition}
    A function $u$ is a solution of the 3D Navier-Stokes equations \eqref{eq: 3D NSEs velocity only} on the time interval $[0,T]$ if
    \begin{equation}
    u_t = e^{-t A_w } u_0 +\int_0^t e^{-(t-r) A_w} P\left(  -u_r \cdot \nabla u_r + f_r^\theta \right) dr, 
    \label{eq: variation of constants formula}
    \end{equation}
    for any $0 \leq t \leq T$, and possesses the regularity
    \[
    u \in \mathbb{E}_{T,p}^{\delta_u} = W^{1,p}(0,T;\prescript{}{0}H^{-\frac{1}{2}-\delta_u }_\sigma (\mathcal{D})) \cap L^p(0,T; \prescript{}{0}H^{\frac{3}{2}-\delta_u }_\sigma (\mathcal{D})).
    \]
    \label{def: mild solution weak sense 3D NSEs}
\end{definition}
Here, and throughout the rest of the paper, $P$ denotes the Helmholtz projection, defined by interpolation from $H^{-\frac{1}{2}- \delta_u} (\mathcal{D})$ to $H^{-\frac{1}{2}- \delta_u}_\sigma  (\mathcal{D})$. The global existence results for the velocity problem read as follows.
\begin{theorem}[Global well-posedness for small data]
    \label{thm: global for small data NSEs with force}
Let $\delta_u \in (0,1)$, $f^\theta  \in L^p(0,T; H^{-\frac{1}{2}-\delta_u} (\mathcal{D}))$, with
$ p > \frac{2}{1-\delta_u}$, and $u_0 \in V_{p}^{\delta_u}$. There exist $\tilde \eta =\tilde  \eta(p, \delta_u, T)>0$ and $M = M(p,\delta_u,T)>0$ such that if $\eta \in (0, \tilde{\eta})$ and
\[
\max \left( \norm{u_0}_{V_{ p}^{\delta_u}}, \norm{f^\theta }_{L^p(0,T; H^{-\frac{1}{2}-\delta_u}(\mathcal{D}))} \right) \leq \frac{\eta}{4 M} , 
\]
then there exists a unique solution $u \in \mathbb{E}_{T,p}^{\delta_u}$ with $\| u \|_{\mathbb{E}_{T,p}^{\delta_u}} \leq \eta$ of the 3D Navier-Stokes equation \eqref{eq: 3D NSEs velocity only} in the sense of Definition \ref{def: mild solution weak sense 3D NSEs}.
\end{theorem}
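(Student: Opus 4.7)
The plan is a Banach fixed point argument for the Duhamel map
$$\Phi(u)(t) := e^{-tA_w} u_0 + \int_0^t e^{-(t-s)A_w} P\bigl(-u_s \cdot \nabla u_s + f_s^\theta\bigr)\,ds$$
on the closed ball $B_\eta := \{u \in \mathbb{E}_{T,p} : \|u\|_{\mathbb{E}_{T,p}} \leq \eta\}$ of sufficiently small radius $\eta > 0$. The first ingredient is the maximal $L^p$-regularity of $A_w$ on $\prescript{}{0}H^{-1/2-\delta}_\sigma(\mathcal{D})$, established earlier via the Pr\"uss-Wilke framework \cite{Pruss2018}: for every $g \in L^p(0,T; \prescript{}{0}H^{-1/2-\delta}_\sigma(\mathcal{D}))$ and $u_0 \in V_p$, the affine problem $\partial_t v + A_w v = g$, $v(0) = u_0$, admits a unique $v \in \mathbb{E}_{T,p}$ with $\|v\|_{\mathbb{E}_{T,p}} \leq M\bigl(\|u_0\|_{V_p} + \|g\|_{L^p(0,T; \prescript{}{0}H^{-1/2-\delta}_\sigma(\mathcal{D}))}\bigr)$ for some $M = M(\delta,T) > 0$.

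The central technical step is the bilinear estimate
$$\|P(u \cdot \nabla v)\|_{L^p(0,T; \prescript{}{0}H^{-1/2-\delta}_\sigma(\mathcal{D}))} \leq C(\delta,T)\, \|u\|_{\mathbb{E}_{T,p}} \|v\|_{\mathbb{E}_{T,p}}.$$
I would combine the trace embedding $\mathbb{E}_{T,p} \hookrightarrow C([0,T]; V_p)$ with the real-interpolation identification $V_p \hookrightarrow H^{3/2-\delta-2/p}(\mathcal{D})$. Since the hypothesis $p > 2/(1-\delta)$ gives $3/2-\delta-2/p > 1/2$, one has $V_p \hookrightarrow H^{1/2+\kappa}(\mathcal{D})$ with $\kappa := 1-\delta-2/p > 0$, and 3D Sobolev places $u$ in $L^\infty(0,T; L^{q_1}(\mathcal{D}))$ with some $q_1 > 3$. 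Simultaneously $\nabla v \in L^p(0,T; H^{1/2-\delta}(\mathcal{D})) \hookrightarrow L^p(0,T; L^{q_2}(\mathcal{D}))$ with $1/q_2 = 1/3 + \delta/3$. H\"older in space then yields $u\cdot\nabla v \in L^p(0,T; L^r(\mathcal{D}))$ with $r$ slightly larger than $3/2$, and since $\mathcal{D}$ is bounded and dual Sobolev gives $L^{3/(2+\delta)}(\mathcal{D}) \hookrightarrow H^{-1/2-\delta}(\mathcal{D})$, the product belongs to $L^p(0,T; H^{-1/2-\delta}(\mathcal{D}))$. Applying the Helmholtz projection $P$, extended by interpolation/duality to a bounded operator on $H^{-1/2-\delta}$ with solenoidal image in $\prescript{}{0}H^{-1/2-\delta}_\sigma$, closes the estimate.

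Combining the two preceding steps, for every $u, w \in B_\eta$ one obtains
$$\|\Phi(u)\|_{\mathbb{E}_{T,p}} \leq M\|u_0\|_{V_p} + M\|f^\theta\|_{L^p(0,T; H^{-1/2-\delta})} + MC\eta^2, \qquad \|\Phi(u)-\Phi(w)\|_{\mathbb{E}_{T,p}} \leq 2MC\eta\, \|u-w\|_{\mathbb{E}_{T,p}}.$$
Choosing $\eta$ small enough that $2MC\eta \leq 1/2$ and enforcing the smallness $\max(\|u_0\|_{V_p}, \|f^\theta\|_{L^p(0,T; H^{-1/2-\delta})}) \leq \eta/(4M)$ forces $\|\Phi(u)\|_{\mathbb{E}_{T,p}} \leq \eta/2 + MC\eta^2 \leq \eta$, so $\Phi$ is a self-map of $B_\eta$ and a strict contraction. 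The Banach fixed point theorem delivers the unique mild solution $u \in \mathbb{E}_{T,p}$ satisfying \eqref{eq: variation of constants formula} with $\|u\|_{\mathbb{E}_{T,p}} \leq \eta$.

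The main obstacle is the bilinear estimate. Working at the negative regularity level $H^{-1/2-\delta}$ requires the embedding of the trace space $V_p$ to sit \emph{strictly above} the critical Sobolev scale $H^{1/2}$, and this is precisely the role played by the hypothesis $p > 2/(1-\delta)$: it forces $\kappa > 0$ and, via the H\"older/Sobolev balancing above, gives the bilinear constant $C(\delta,T)$. A secondary but essential point, which should cause no additional difficulty once the framework of the weak Stokes operator is in place, is that $P$ remains bounded down to $H^{-1/2-\delta}$ with solenoidal range in $\prescript{}{0}H^{-1/2-\delta}_\sigma$, which is inherited from the interpolation/duality construction already used to define $A_w$.
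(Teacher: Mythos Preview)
Your proof is correct and follows essentially the same route as the paper: a Banach fixed-point argument built on maximal $L^p$-regularity of $A_w$ together with the bilinear estimate $\|u\cdot\nabla v\|_{L^p_t(\prescript{}{0}H^{-1/2-\delta}_\sigma)}\lesssim\|u\|_{\mathbb{E}_{T,p}}\|v\|_{\mathbb{E}_{T,p}}$, valid precisely when $p>2/(1-\delta)$. The only cosmetic differences are that the paper centers the ball at the linear reference solution $v_*$ rather than at $0$, and proves the bilinear bound (its Lemma~\ref{lemma: estimate convective term}) via a fractional Sobolev multiplication theorem $W^{s_1,2}\times W^{s_2,2}\hookrightarrow W^{-1/2-\delta,2}$ with $s_1=3/2-\delta-2/p$, $s_2=1/2-\delta$, whereas you pass through $L^q$ via Sobolev embeddings and H\"older---the arithmetic constraint is the same.
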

\begin{remark}
Note that Definition \ref{def: mild solution weak sense 3D NSEs} is well-posed due to the fact that, as will be demonstrated in Section \ref{subsec: stokes operator in weak asetting and max reg}, the operator $A_w$ admits a bounded $\mathcal{H}^\infty$-calculus on $\prescript{}{0}H^{-\frac{1}{2}-\delta_u}_\sigma (\mathcal{D})$. Thus, the operator $-A_w$ generates an analytic semigroup on $\prescript{}{0}H^{-\frac{1}{2}-\delta_u}_\sigma (\mathcal{D})$, ensuring that the variation of constants formula \eqref{eq: variation of constants formula} is meaningful, thanks also to the estimate of the convective term
\[
\norm{P(u \cdot  \nabla u)}_{L^p(0,T; \prescript{}{0}H^{-\frac{1}{2}-\delta_u}_\sigma (\mathcal{D}))} \lesssim  \norm{u}^2_{\mathbb{E}_{T,p}^{\delta_u}},
\]
which will be proved in Lemma \ref{lemma: estimate convective term}.
\end{remark}
We now shift our focus to the fully coupled velocity-temperature system \eqref{eq: full system}. The corresponding notion of solution is introduced below.

\begin{definition}
    \label{def: notion of solution}
   The triple $(u^{\varepsilon }, \theta^{\varepsilon }, \tau^\varepsilon)$ is a \emph{solution} of \eqref{eq: full system} on $[0,T]$ if $\tau^\varepsilon \colon \Omega \to [0,T]$ is a stopping time and $(u^{\varepsilon }, \theta^{\varepsilon })$ is a stochastic process with trajectories $\mathbb{P}$-a.s. in 
\[
\mathbb{E}^{\delta_u}_{\tau^\varepsilon ,p}
      \times C\bigl(0,\tau^\varepsilon;H^{- \tfrac{1}{2}-\delta_u}(\mathcal{D})\bigr),
   \]
   where $u^{\varepsilon}$ is adapted to $(\mathcal{F}_t)_t$ as a process in
   $\prescript{}{0}H^{- \frac{1}{2}-\delta_u }_\sigma (\mathcal{D})$, and progressively
   measurable as a process in
   $\prescript{}{0}H^{ \frac{3}{2}-\delta_u }_\sigma (\mathcal{D})$, while
   $\theta^{\varepsilon}$ is adapted as a process in
   $H^{- \frac{1}{2}-\delta_u}(\mathcal{D})$, and, for any $0\leq t \leq \tau^\varepsilon$, it holds 
     \begin{equation*}
         \begin{aligned}
             u_t^{\varepsilon} &= e^{-tA_w} u_0 + \int_0^t e^{-(t-r)A_w} P\left(  -u_r^{\varepsilon} \cdot \nabla u_r^{\varepsilon } - \theta ^{\varepsilon}_r  e_3 \right)dr,\\
             \theta_t^{\varepsilon} &= \zeta_t^\varepsilon + Z_{t}^\varepsilon, \\
             \zeta_t^{\varepsilon} &= e^{t\Delta }\theta_{0}-\int_{0}^{t}e^{\left(  t-r\right)  \Delta}\left(
u_{r}^{\varepsilon}\cdot\nabla\zeta_{r}^{\varepsilon }\right)  dr-\int_{0}^{t}e^{\left(  t-r\right)
\Delta}\left(  u_{r}^{\varepsilon}\cdot\nabla Z_{r}^\varepsilon\right)  dr,\\
             Z_t^\varepsilon &= - \sqrt{\varepsilon} \int_0^t \Delta e^{(t-r)\Delta }  D dW_r,
         \end{aligned}
     \end{equation*}
    with probability one.
\end{definition}
Lastly, we state the main result of our work.

\begin{theorem}
\label{thm: main theorem}
Let $T>0$, $s\in(0,\tfrac{1}{2})$ and $p>4$, and choose $\gamma,\lambda$ as in \eqref{eq:param-gamma} and \eqref{eq:param-lambda}. Let $\bar\delta_\theta$ and $\bar\delta_u$ be given by \eqref{eq:param-bar-deltas}. Let parameters $\delta_\theta, \delta_u$ satisfy
\[
0 < \delta_\theta <  \bar\delta_\theta,
\qquad 0 < \delta_u< \bar\delta_u,
\]
subject to the compatibility conditions
\begin{equation}
\delta_u \geq \max\left\{\delta_\theta, \frac{1}{2}-s\right\},  \qquad
\delta_u < 1 - \frac{2}{p},
    \label{eq: compatability conditions}
\end{equation}
which define a non-empty set for the given range of $s$ and $p$. Set
\[
\alpha_\theta:= \frac1 4+   \frac{\delta_\theta}{2},
\qquad \beta_\theta:=\frac1 4-\frac{\delta _\theta}{4 },
\]
and assume
\[
\sum_k \lambda_k^2 \|(-\Delta)^{\beta_\theta}De_k\|_{L^2(\mathcal D)}^2<\infty.
\]
Then there exist constants $\eta>0$ and
$\widetilde M=\widetilde M(\delta_\theta,\delta_u,T)\ge 2$ such that the following holds:
if
\[
(\theta_0,u_0)\in W^{s,6/5}(\mathcal D)\times V_p^{\delta_u} ,
    \qquad
\max\{\|\theta_0\|_{W^{s,6/5}(\mathcal D)},\ \|u_0\|_{V_p^{\delta_u}}\}
\leq \frac{\eta}{16\widetilde M},
\]
then for every $\varepsilon>0$ there exists a unique solution
$(u^\varepsilon,\theta^\varepsilon,\tau^\varepsilon)$ of \eqref{eq: full system}
on $[0,T]$ in the sense of Definition~\ref{def: notion of solution}, such that
\[
\|u^\varepsilon\|_{\mathbb E^{\delta_u}_{\tau^\varepsilon,p}}  \leq \eta,
\qquad
\|\theta^\varepsilon\|_{C\bigl(0,\tau^\varepsilon;H^{-\frac12-\delta_u}(\mathcal D)\bigr)}\le \eta,
\qquad \mathbb{P}-\text{a.s.}
\]
Moreover, there exists a constant $C(\delta_\theta,T)>0$ such that
\[
\mathbb{ P}(\tau^\varepsilon=T ) \geq 1-\frac{64\,\widetilde M^2\,\varepsilon}{\eta^2}\,
C(\delta_\theta,T) 
\sum_k  \lambda_k^2 \|(-\Delta)^{\beta_\theta}De_k\|_{2}^2.
\]
\end{theorem}

\subsection{Overview}
The rest of this work is organised as follows. In Section~\ref{sec: temperature problem}, we present the analysis of the temperature problem~\eqref{eq: temperature for given velocity}, assuming a given velocity field with specified regularity. The strategy is to split the problem into a linear system with non-homogeneous Dirichlet boundary noise~\eqref{eq: main results stochastic heat with noise on the boundary}, and the remainder system~\eqref{eq: zeta temperature only}. In Section~\ref{sec: velocity problem}, we analyse the velocity system~\eqref{eq: 3D NSEs velocity only}, subject to a prescribed temperature forcing of specified regularity. In particular, we show that the weak Stokes operator admits a bounded $\mathcal{H}^\infty$-calculus and enjoys maximal regularity. We then apply these results to prove a global well-posedness result for small data for the 3D Navier–Stokes equations~\eqref{eq: 3D NSEs velocity only}. Lastly, Section~\ref{sec: well posedness coupled model} is entirely devoted to the proof of Theorem~\ref{thm: main theorem}.

\section{Temperature problem}
\label{sec: temperature problem}
In this section we describe how to solve the temperature problem \eqref{eq: temperature for given velocity}, which is split in the linear problem \eqref{eq: main results stochastic heat with noise on the boundary}, and the remainder problem \eqref{eq: zeta temperature only}.
\subsection{Stochastic linear problem with non-homogeneous Dirichlet boundary conditions}
\label{subsec: stochastic linear problem with noise on the boundary}
 We start this section by proving the first part of Proposition \ref{prop: regularity for Z_t}, in  particular the well-posedness and the regularity for the mild solution $Z_t^\varepsilon$ of the linear problem \eqref{eq: main results stochastic heat with noise on the boundary} subject to Dirichlet noise boundary conditions. Note that this result is well-known in the literature, see for instance \cite[Proposition 3.1]{DaPrato1993}.

\begin{proof}[Proof of Proposition \ref{prop: regularity for Z_t} (i)-(ii)]
Since (i)-(ii) do not depend on $\varepsilon$, we assume $\varepsilon = 1$ and omit the dependence on $\varepsilon$ of $\xi$ and $Z$ in the following.

(i) First, considering the splitting 
\[
Z_t =  (- \Delta) ^{\alpha_\theta} \int_0^t (-\Delta) ^{1-\alpha_\theta -\beta_\theta}e^{(t-r)\Delta }(-\Delta )^{\beta_\theta} DdW_r,
\]
$Z_t$ is well-defined and takes values in $H^{-2\alpha_\theta}(\mathcal{D})$, assuming \eqref{eq: convergence series eigenvalue noise for continuity xi}, if
\begin{equation}
    \left \lbrace
    \begin{aligned}
        \alpha_\theta + \beta_\theta & > \frac{1}{2}, \\
    \beta_\theta     & < \frac{1}{4}.
    \end{aligned}
    \right.
    \label{eq: conditions alpha e beta}
\end{equation}
Indeed, the first inequality in \eqref{eq: conditions alpha e beta} is a consequence of the square integrability conditions needed to define the It\^{o} integral. The second guarantees that
\[
\int_0^t (-\Delta)^{1-\alpha_\theta - \beta_\theta} e^{(t-r)\Delta} (-\Delta) ^{\beta_\theta }D  dW_r  = \sum_k \lambda_k \int_0^t (-\Delta) ^{1-\alpha_\theta - \beta_\theta }e^{(t-r)\Delta} (-\Delta) ^{\beta_\theta} D e_k \, d\beta_k (r)
\]
is well-defined. In particular $D e_k \in D((-\Delta)^{\beta_\theta})$, where the explicit characterisation of the domain for fractional powers of the Dirichlet Laplacian was recalled in Section \ref{subsec: notations and functional setting}. Note that \eqref{eq: conditions alpha e beta} is satisfied by our choice of $\alpha_\theta,\beta_\theta .$

(ii) We observe that $\xi_t$ is a Gaussian process with values in $L^2(\mathcal{D})$ and with zero mean. If we are able to prove that there exist $M>0$ and $\eta\in (0,1]$ such that
    \begin{equation}
        \mathbb{E} \left[\norm{\xi_{t_2} - \xi_{t_1}}_2^2  \right]\leq M (t_2 -t_1)^\eta, \quad \forall \, 0 \leq t_1 \leq t_2,
        \label{eq: kolmogorov test hp for gaussian processes}
    \end{equation}
    then the thesis follows from the Kolmogorov test for Gaussian processes, see \cite[Proposition 3.16]{DaPrato}. Consider
    \[
    \quad \xi_t =   \int_0^t (-\Delta) ^{1-\alpha_\theta}   e^{(t-r)\Delta }D dW_r =  \int_0^t (-\Delta) ^{1-\alpha_\theta - \beta_\theta}e^{(t-r)\Delta } (-\Delta)^{\beta_\theta }D dW_r,
    \]
    with
    \[
\alpha_\theta = \frac{1}{4}+ \frac{\delta_\theta}{2}, \quad \beta_\theta  = \frac{1}{4}- \frac{\delta_\theta}{4}.
\]
Then, for any $0 \leq t_1 \leq t_2$, we have
    \[
    \xi_{t_2} - \xi_{t_1} =  I_1 + I_2,
    \]
    with
    \[
    I_1 := \sum_{k}\lambda_{k}\int_{t_1}^{t_2}\left[
(-\Delta)^{1-\alpha_\theta-\beta_\theta}e^{\left(  t_2-r\right)  \Delta}(-\Delta)^{\beta_\theta }D e_{k}\right] \, d\beta _{k}(r)
\]
and
\[
I_2 :=  \sum_k \lambda_k\int_0^{t_1} \left[(-\Delta)^{1-\alpha_\theta - \beta_\theta } \left(e^{(t_2 -r)\Delta } - e^{(t_1-r) \Delta} \right)  (-\Delta)^{\beta_\theta} D e_k  \right]d \beta_k (r).
    \]
    
    For the estimate of the first integral $I_1$, by the independence of $(\beta_k) _k$, the It\^{o}-isometry, and the estimate for the heat-semigroup, we get
    \begin{equation}
        \begin{aligned}
                \mathbb{E} \left[ \| I_1\|_2^2 \right] &\leq \sum_k \lambda_k^2 \int_{t_1}^{t_2} \norm{(-\Delta)^{1-\alpha_\theta-\beta_\theta}e^{\left(  t_2-r\right)  \Delta} (-\Delta)^{\beta_\theta}D e_{k}}_2^2 dr   \\
                &\leq C\sum_k \lambda_k^2 \norm{(-\Delta)^{\beta_\theta} D e_k}_2^2\int_{t_1}^{t_2}  (t_2-r)^{-2(1-\alpha_\theta-\beta_\theta )}dr.
        \end{aligned}
    \end{equation}
     By our choice of $\alpha_\theta $ and $\beta_\theta$, it holds
    \[
    -2(1- \alpha_\theta - \beta_\theta) = -1+ \frac{\delta_\theta}{2}.
    \]
    Since
    \[
    \int_{t_1}^{t_2} (t_2-r)^{-1+ \frac{\delta_\theta }{2}} \,dr = \frac{2}{\delta_\theta  }(t_2 - t_1)^{{\delta_\theta }/2},
    \]
    we get, up to renaming $C$,
    \begin{equation}
        \begin{split}
                \mathbb{E}\left[\| I_1 \|_2^2 \right] \leq C (t_2-t_1)^{\frac{\delta_\theta }{2}}   \sum_k \lambda_k^2 \norm{(-\Delta)^{\beta_\theta} D e_k}_2^2 .
        \end{split}
        \label{eq: final estimate I1}
    \end{equation}
For the estimate of the second integral $I_2$, fix $0\leq t_1\leq t_2$. By the semigroup property,
\[
e^{(t_2-r) \Delta} -e^{(t_1-r)\Delta}=e^{(t_1-r) \Delta}(e^{(t_2-t_1)\Delta}-I),\qquad 0\le r\le t_1.
\]
Let $\gamma\in(0,1]$ (to be chosen later). Using the commutation of $(-\Delta)^\sigma$
with $e^{t \Delta}$ and inserting $(-\Delta)^\gamma (-\Delta)^{-\gamma}$, we obtain
\begin{equation}
    \begin{split}
        \norm{(-\Delta)^{1-\alpha_\theta-\beta_\theta}( e^{(t_2-r) \Delta}-  e^{(t_1-r) \Delta})}_{\mathcal L(L^2)}
&=\|(-\Delta)^{1-\alpha_\theta-\beta_\theta}e^{(t_1-r) \Delta}(e^{(t_2-t_1)\Delta}-I)\|_{\mathcal L(L^2)}\\
&=\|(-\Delta)^{1-\alpha_\theta-\beta_\theta+\gamma}e^{(t_1-r) \Delta} 
(-\Delta)^{-\gamma}(e^{(t_2-t_1) \Delta}-I)\|_{\mathcal L(L^2)}\\
&\leq \|(-\Delta)^{1-\alpha_\theta-\beta_\theta+\gamma}e^{(t_1-r) \Delta}  \| \, \| 
(-\Delta)^{-\gamma}(e^{(t_2- t_1)\Delta}-I)\|_{\mathcal L(L^2)}.
    \end{split}
    \label{eq:I2_split}
\end{equation}
For the first factor we use \cite[Theorem 6.13(c)]{Pazy} and we get
\begin{equation}
\|(-\Delta)^{1-\alpha_\theta-\beta_\theta+\gamma}e^{(t_1-r)\Delta}\|_{\mathcal L(L^2)}
\leq C (t_1-r)^{-(1-\alpha_\theta-\beta_\theta+\gamma)},
\label{eq: fractional power estimate for bound I2}
\end{equation}
for a constant $C$ independent of $t_1$. For the second factor, let $g\in L^2(\mathcal D)$ and set $x:=(-\Delta)^{-\gamma}g\in D((-\Delta)^\gamma)$. Since $(-\Delta)^{-\gamma}$ commutes with the heat semigroup $e^{(t_2- t_1) \Delta}$, we have
\[
(-\Delta)^{-\gamma}(e^{(t_2-t_1)\Delta}-I)g=(e^{(t_2-t_1) \Delta}-I)(-\Delta)^{-\gamma}g=(e^{(t_2-t_1) \Delta}-I)x.
\]
Applying \cite[Theorem 6.13(d)]{Pazy} with exponent $\gamma$ yields
\[
\|(e^{(t_2-t_1) \Delta}-I)x\|_2\leq C (t_2-t_1)^\gamma \|(-\Delta)^\gamma x\|_2 = C (t_2- t_1)^\gamma\|g\|_2,
\]
for a new constant $C$ independent of $t_1, t_2$. Therefore
\begin{equation}
\|(-\Delta)^{-\gamma}(e^{(t_2- t_1) \Delta}-I)\|_{\mathcal L(L^2)}
\le C (t_2- t_1)^\gamma.
\label{eq:I2_second_factor}
\end{equation}
Combining \eqref{eq:I2_split}-\eqref{eq:I2_second_factor} we obtain, for all $0\leq r\le t_1$,
\begin{equation}
\|(-\Delta)^{1-\alpha_\theta-\beta_\theta}(e^{(t_2-r)\Delta}-e^{(t_1-r) \Delta})\|_{\mathcal L(L^2)}
\leq C (t_2- t_1)^\gamma (t_1-r)^{-(1-\alpha_\theta-\beta_\theta+\gamma)}.
\label{eq:pazy_difference_bound}
\end{equation}

Using independence of $(\beta_k)_k$, It\^o isometry, and \eqref{eq:pazy_difference_bound}, we get
\begin{align*}
\mathbb{E} \|I_2\|_2^2 
&=\sum_k \lambda_k^2\int_0^{t_1}
\|(-\Delta)^{1-\alpha_\theta-\beta_\theta}(e^{(t_2-r) \Delta}-e^{(t_1-r) \Delta})(-\Delta)^{\beta_\theta}De_k\|_2^2\,dr\\
&\leq \sum_k \lambda_k^2\int_0^{t_1}
\|(-\Delta)^{1-\alpha_\theta-\beta_\theta}(e^{(t_2-r) \Delta}-e^{(t_1-r) \Delta})\|_{\mathcal L(L^2)}^2 
\|(-\Delta)^{\beta_\theta}De_k\|_2^2\,dr\\
&\leq C (t_2- t_1)^{2\gamma}\left(\sum_k \lambda_k^2\|(-\Delta)^{\beta_\theta}De_k\|_2^2\right)
\int_0^{t_1}(t_1-r)^{-2(1-\alpha_\theta-\beta_\theta+\gamma)}\,dr.
\end{align*}
The time-integral is finite provided $2(1-\alpha_\theta-\beta_\theta+\gamma)<1$, i.e.
\[
\gamma<\alpha_\theta+\beta_\theta-\frac{1}{2}.
\]
Since $\alpha_\theta+\beta_\theta=\frac{1}{2}+\frac{\delta_\theta}{4}$, we may choose
any $\gamma\in (0,\tfrac{\delta_\theta}{4})$, and then
\begin{equation}
\mathbb E\|I_2\|_2^2
\leq C\,(t_2-t_1)^{2\gamma}\sum_k \lambda_k^2\|(-\Delta)^{\beta_\theta}De_k\|_2^2,
\label{eq: finale estimate I2}
\end{equation}
where $C>0$ depends on $T,\alpha_\theta,\beta_\theta,\gamma$ but not on $t_1,t_2$. In conclusion, applying Jensen's inequality and setting $\eta := \min (\delta_\theta/2, 2\gamma ) \in (0,1]$, we obtain \eqref{eq: kolmogorov test hp for gaussian processes} by combining the estimates for $I_1$, $I_2$ in \eqref{eq: final estimate I1} and \eqref{eq: finale estimate I2}.

\end{proof}

We conclude this section by recalling, and then applying, the notion of stochastic convolution to prove point (iii) of Proposition \ref{prop: regularity for Z_t}. 

Given $U,H$, Hilbert spaces, let $U_0 := \mathcal{Q}^{1/2}(U)$ and set $L^0_2 = L_2(U_0,H)$ the space of Hilbert-Schmidt operators from $U_0$ to $H$, with norm
\[
\norm{R}_{L^0_2} = \norm{ R \circ \mathcal{Q}^{1/2}}_{L_2(U,H)},
\]
where $L_2 = L_2(U,H)$ is the space of Hilbert-Schmidt operators from $U$ to $H$ with norm
\[
\norm{R}_{L_2}^2 = \sum_k \norm{Re_k}_H^2, 
\]
where again $(e_k)_k \subset U$ denotes an orthonormal basis of $U$. Consider $(W_t)_t$ to be a $U$-valued $\mathcal{Q}$-Wiener process and $\mathcal{A} \colon D(\mathcal{A}) \subset H \to H$ a linear operator which generates a $C_0$-semigroup $(e^{t \mathcal{A}})_{t \geq 0}$ in $H$. Then, 
we consider the following space of stochastic processes
\[
\mathcal{N}_W^2(0,T) := \left \lbrace  \Phi \colon [0,T] \times \Omega  \to L^0_2 \; \mid \; \Phi \text{ is predictable  and }\norm{\Phi}_{T} <\infty \right \rbrace ,
\]
where
\[
\norm{\Phi}_{T} := \left( \mathbb{ E}  \left[ \int_0^{T} \norm{\Phi (t)}_{L^0_2}^2 \, dt \right]\right)^{\frac{1}{2}}.
\]
Lastly, for $\Phi \in \mathcal{N}^2_W(0,T)$ we recall that the stochastic convolution $W_{\mathcal{A}}^\Phi$ is defined as
\[
W_\mathcal{A}^\Phi (t) = \int_0^t e^{(t-r)\mathcal{A}} \Phi (r)\,dW_r, \quad t \in [0,T],
\]
and we refer to \cite{DaPrato} for more information. We can now move to the proof of the last part of Proposition \ref{prop: regularity for Z_t}.

\begin{proof}[Proof of Proposition \ref{prop: regularity for Z_t} (iii)]
 By Markov's inequality, for any $\vartheta>0$, we have
    \begin{equation}
        \begin{split}
                \mathbb{P} \left( \sup_{ 0 \leq t \leq T} \norm{Z_t^\varepsilon}_{H^{-2 \alpha_\theta}(\mathcal{D})} > \vartheta  \right)& \leq \frac{1}{\vartheta ^2} \mathbb{E} \left[ \sup_{ 0 \leq t \leq T} \norm{Z_t^\varepsilon}^2_{H^{-2\alpha_\theta}(\mathcal{D})}  \right].
        \end{split}
        \label{eq: markov inequality prop 21}
    \end{equation}
    To obtain the thesis, we thus need to bound the expectation on the right-hand side. We start to observe that, since $Z_t^\varepsilon = (-\Delta)^{ \alpha_\theta}\xi_t^\varepsilon$, then
    \[
    \norm{Z^\varepsilon_t}_{H^{-2\alpha_\theta}(\mathcal{D})} =   \norm{(-\Delta)^{ \alpha_\theta}\xi_t^\varepsilon}_{H^{-2\alpha_\theta}(\mathcal{D})} \cong \norm{\xi_t^\varepsilon}_2.
    \]
    Second, recall that
    \[
    \xi_t^\varepsilon =  \sqrt{\varepsilon} \int_0^t (- \Delta)^{1- \alpha_\theta - \beta_\theta } e^{(t-r) \Delta } (- \Delta)^{\beta_\theta} D\, dW_r.
    \]
    Set $\Phi := (-\Delta)^{\beta_\theta} D $, and observe that $\Phi \in L^0_2$ since 
    \[
    \| \Phi \|_{L^0_2}^2 = \sum_k \lambda_k^2 \| (-\Delta)^{\beta_\theta }D e_k \|_2^2 <\infty,
    \]
    where $(e_k)_k$ is an orthonormal basis of $U $ made of eigenvectors for the covariance operator $\mathcal{Q}.$ Thus, it is well-defined the $L^2(\mathcal{D})$-stochastic convolution
    $
    W^\Phi_\Delta (t):= \int_0^t e^{(t-r) \Delta }\Phi \, dW_r
    $, for $t \in [0,T]$, and $\xi_t^\varepsilon = \sqrt{\varepsilon} (- \Delta)^{\frac{1}{2}- \frac{\delta_\theta }{4}} W_\Delta ^\Phi (t)$. Choose now $\rho \in ( \frac{1}{2}- \frac{\delta_\theta }{4}, \frac{1}{2})$, and set
    \[
    Y_\rho (t) := \int_0^t (t-r)^{-\rho} e^{(t-r) \Delta } \Phi dW_r, \quad t \in [0,T].
    \]
    By the factorisation method, see \cite[Section 5.3.1]{DaPrato}, since $\int_0^T t^{-2\rho} \|e^{t \Delta} \Phi \|_{L^0_2}^2 dt <\infty $,
    it holds
    \[
    W^\Phi_\Delta (t) =\frac{\sin (\pi \rho)}{\pi } \int_0^t (t-r)^{\rho-1} e^{(t-r) \Delta } Y_\rho (r) \, dr.
    \]
Thus, using the estimate for the fractional powers as in \eqref{eq: fractional power estimate for bound I2}, we have
\begin{equation*}
    \begin{split}
        \| \xi_t^\varepsilon\|_2 \leq C \sqrt{\varepsilon} \int_0^t (t-r)^{\rho -1} \| (- \Delta)^{\frac{1}{2}- \frac{\delta_\theta }{4}} e^{(t-r)\Delta}\|_{\mathcal{L}(L^2)} \| Y_\rho (r) \|_2 dr \leq C \sqrt{\varepsilon} \int_0^t (t-r)^{\rho-\frac{3}{2}+ \frac{\delta_\theta }{4}} \| Y_\rho (r) \|_2 dr.
    \end{split}
\end{equation*}
    Fix now $q >2 $ such that $\frac{1}{q}< \rho - (\frac{1}{2}- \frac{\delta_\theta}{4})$, and let $q '$ be the conjugate exponent of $q $, i.e. $q' = \frac{q}{q -1}.$ Then $g(\sigma ):= \sigma ^{\rho -1-(\frac{1}{2}-\frac{\delta_\theta }{4})} \in L^{q '}(0, T)$, and by H\"older inequality
    \[
    \sup_{0 \leq t \leq T} \| \xi^\varepsilon_t \|_2 \leq C \sqrt{\varepsilon} \| g \|_{L^{q'}(0,T)} \| Y_\rho \|_{L^q (0,T;L^2(\mathcal{D}))}.
    \]
    Considering the $q$-th moment, we have
    \begin{equation}
        \mathbb{E} \left[\sup_{0 \leq t \leq T} \| \xi_t ^\varepsilon \|_2^q  \right] \leq C \varepsilon^{q /2} \mathbb{E} \left[  \int_0^T \| Y_\rho (r) \|_2^q \, dr \right].
        \label{eq: gamma moment start xi}
    \end{equation}
    Since $Y_\rho$ is a centered Gaussian random variable in $L^2(\mathcal{D})$, there exists $c_q >0$ such that $\mathbb{E}[ \| Y_\rho (r) \|_2^q] \leq c_q\left( \mathbb{E}[\| Y_\rho (r) \|_2^2]\right)^{q/2}$. By It\^{o} isometry and the contraction of $e^{t \Delta}$ on $L^2$, we have
    \[
    \mathbb{E} \left[\| Y_\rho (r)\|_2^2\right] = \int_0^r (r- \sigma )^{- 2\rho } \| e^{(r- \sigma ) \Delta } \Phi \|_{L^0_2}^2 \, d\sigma  \leq \| \Phi \|_{L^0_2}^2 \int_0^r (r- \sigma)^{- 2\rho}\, d \sigma = \| \Phi \|_{L^0_2}^2 \frac{r^{1- 2 \rho}}{1- 2\rho } .
    \]
    Therefore
    \begin{equation}
\mathbb{E}\left[ \int_0^T \| Y_\rho (r) \|_2^{q} \, dr\right] \leq c_q \| \Phi \|_{L^0_2}^q\int_0^T r^{\frac{(1-2\rho) q }{2}} dr \leq c_{q, \rho, T} \| \Phi \|_{L^0_2}^q. 
        \label{eq: bound Y rho  gamma moment}
    \end{equation}
    Thus, substituting \eqref{eq: bound Y rho  gamma moment} inside \eqref{eq: gamma moment start xi}, we obtain
    \[
    \mathbb{ E} \left[ \sup_{0 \leq t \leq T} \| \xi_t^\varepsilon \|_2^q \right] \leq C(\delta_\theta , T, q) \varepsilon^{q /2 } \| \Phi \|_{L^0_2}^q .
    \]
    Lastly, since $q >2$, by Jensen inequality we deduce
    \begin{equation*}
    \begin{split}
    \mathbb{E} \left[\sup_{0 \leq t \leq T} \| \xi_t^\varepsilon \|_2^2 \right] \leq \left( \mathbb{E} \left[ \sup_{0 \leq t \leq T} \| \xi_t^\varepsilon \|_2^q\right]\right)^{2/q} \leq \varepsilon C(\delta_\theta, T)  \| \Phi \|_{L^0_2}^2 
    =  \varepsilon C(\delta_\theta, T) \sum_k \lambda_k^2 \| (- \Delta)^{\beta_\theta } De_k \|_2^2.
    \end{split}
    \end{equation*}
    The thesis then follows by substituting the previous bound into \eqref{eq: markov inequality prop 21}.
\end{proof}

\subsection{The remainder temperature equation}
Let $\delta_u >0$ be a small regularity exponent for the velocity. Given a prescribed divergence-free velocity field 
\[
u \in L^p(0,T; H^{\frac{3}{2}-\delta_u}_\sigma (\mathcal{D})) \cap L^\infty (0,T; H^{\frac{1}{2}-\delta_u}_\sigma (\mathcal{D})), \quad p\in(1,\infty)
\]
in this section we consider the remainder $\zeta_t^\varepsilon:= \theta^\varepsilon_t - Z^\varepsilon_t$ between the solution $\theta_t^\varepsilon$ of the system \eqref{eq: temperature for given velocity}, and the solution $Z^\varepsilon_t$ of the linear problem \eqref{eq: main results stochastic heat with noise on the boundary} considered in the previous section.

We start by proving an auxiliary interpolation result for the velocity field $u$, which will be used throughout this section.
\begin{corollary}
Let $\delta_u \in (0,1)$ and $p\in(1,\infty)$. If 
\[
u \in L^p(0,T; H^{\frac{3}{2}-\delta_u}(\mathcal{D})) \cap L^\infty(0,T; H^{\frac{1}{2}-\delta_u}(\mathcal{D})),
\]
then for all $ 0 < \lambda\leq 1-\delta_u$, we have
\[
u\in L^{\frac{p}{\lambda+\delta_u}}\left(  0,T;H^{\frac{1}{2}+\lambda}(\mathcal{D})\right).
\]
    \label{cor: interpolation regularity of u}
\end{corollary}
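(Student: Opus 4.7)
The plan is to combine a pointwise-in-time spatial interpolation inequality between the two Bessel potential spaces with a Hölder estimate in time. Since $W^{s,2} = H^{s,2}$ for all $s \in \R$, the target space $W^{\frac{1}{2}+\lambda, 2}(\mathcal{D})$ coincides with $H^{\frac{1}{2}+\lambda, 2}(\mathcal{D})$, and I can apply Hilbert-scale interpolation between $H^{\frac{3}{2}-\delta}(\mathcal{D})$ and $H^{\frac{1}{2}-\delta}(\mathcal{D})$.

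First, I solve for the interpolation index $\theta \in (0,1)$ satisfying
\[
(1-\theta)\Bigl(\tfrac{3}{2}-\delta\Bigr) + \theta\Bigl(\tfrac{1}{2}-\delta\Bigr) = \tfrac{1}{2}+\lambda,
\]
which yields $\theta = 1-\lambda-\delta$ and $1-\theta = \lambda+\delta$. The condition $0 < \lambda < 1-\delta$ guarantees $\theta \in (0,1)$. Standard Bessel potential interpolation then gives, for a.e. $t \in (0,T)$,
\[
\|u(t)\|_{H^{\frac{1}{2}+\lambda,2}(\mathcal{D})} \;\leq\; C\,\|u(t)\|_{H^{\frac{3}{2}-\delta}(\mathcal{D})}^{\lambda+\delta}\,\|u(t)\|_{H^{\frac{1}{2}-\delta}(\mathcal{D})}^{1-\lambda-\delta}.
\]

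Second, I raise this inequality to the power $q := p/(\lambda+\delta)$ and integrate over $(0,T)$. The first factor acquires exponent $p$, which matches the $L^p_t$ regularity of $u$ in $H^{\frac{3}{2}-\delta}$, and the second factor, being raised to a power independent of $t$, can be bounded by the $L^\infty_t$ norm and pulled outside the integral. Explicitly,
\[
\int_0^T \|u(t)\|_{H^{\frac{1}{2}+\lambda,2}(\mathcal{D})}^{q}\,dt \;\leq\; C^{q}\,\|u\|_{L^\infty(0,T; H^{\frac{1}{2}-\delta}(\mathcal{D}))}^{(1-\lambda-\delta)q}\,\int_0^T \|u(t)\|_{H^{\frac{3}{2}-\delta}(\mathcal{D})}^{p}\,dt,
\]
which is finite by hypothesis. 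Taking the $1/q$-th power yields the claimed bound
\[
\|u\|_{L^{\frac{p}{\lambda+\delta}}(0,T; W^{\frac{1}{2}+\lambda,2}(\mathcal{D}))} \;\leq\; C\,\|u\|_{L^\infty(0,T; H^{\frac{1}{2}-\delta}(\mathcal{D}))}^{\frac{1-\lambda-\delta}{\lambda+\delta}}\,\|u\|_{L^p(0,T; H^{\frac{3}{2}-\delta}(\mathcal{D}))}^{\frac{\lambda+\delta}{1}}.
\]

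This argument has essentially no substantive obstacle: it is a textbook interpolation/Hölder computation. The only verification points are that the chosen $\theta$ indeed lies in $(0,1)$ under the assumption $0<\lambda<1-\delta$, and that the bookkeeping of exponents is consistent, namely $(\lambda+\delta)\cdot q = p$ so that the resulting time-integrability index $q = p/(\lambda+\delta)$ is precisely the one stated in the corollary.
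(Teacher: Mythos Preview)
Your proof is correct and follows essentially the same route as the paper's: pointwise-in-time interpolation between $H^{\frac{3}{2}-\delta}$ and $H^{\frac{1}{2}-\delta}$ with parameter $\lambda+\delta$, then integration in time using the $L^\infty_t$ bound on the low-regularity factor. The only slip is cosmetic: in your final displayed estimate the exponents should read $1-\lambda-\delta$ and $\lambda+\delta$ (not $\tfrac{1-\lambda-\delta}{\lambda+\delta}$ and $\tfrac{\lambda+\delta}{1}$), but this does not affect the argument.
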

\begin{proof}
Note that if $\lambda = 1- \delta_u$, then the statement is trivial. Thus, fix $0 < \lambda < 1- \delta_u$, and define $\delta' := 1- \lambda.$ Then $ \delta_u < \delta ' < 1+ \delta_u $. Since $\frac{3}{2}- \delta' \in (\frac{1}{2}- \delta_u, \frac{3}{2}- \delta_u )$, there exists $\vartheta  \in (0,1)$ such that
\[
\frac{3}{2}-\delta' =\left(  1-\vartheta\right) \left(  \frac{1}
{2}-\delta_u\right)  +\vartheta \left(  \frac{3}{2}-\delta_u\right),
\]
which yields
\[
\vartheta = 1- \delta'+ \delta_u.
\]
Using the interpolation inequality
\[
\left\Vert u_t  \right\Vert _{H^{\frac{3}{2}-\delta^{\prime
}}(\mathcal{D})}\leq C\left\Vert u_t  \right\Vert _{H^{\frac{1}{2}-\delta_u}%
(\mathcal{D})}^{\delta'-\delta_u} \Vert u_t\Vert
_{H^{\frac{3}{2}-\delta_u}(\mathcal{D})}^{1-\delta'+\delta_u},
\]
for a constant $C>0$. Therefore, for any $q \geq 1$, we have
\begin{align*}
\int_{0}^{T}\left\Vert u_t  \right\Vert _{H^{\frac{3}
{2}-\delta'}(\mathcal{D})}^q dt  & \leq C\int_{0}^{T}\left\Vert u_t
\right\Vert _{H^{\frac{1}{2}-\delta_u} (\mathcal{D})}^{\left(  \delta'%
-\delta_u\right)  q}\left\Vert u_t  \right\Vert _{H^{\frac
{3}{2}-\delta_u}(\mathcal{D})}^{\left(  1-\delta'+\delta_u\right)  q}dt\\
& \leq C\left\Vert u\right\Vert _{L^{\infty}\left( 0,T; H^{\frac{1}
{2}-\delta_u}(\mathcal{D})\right)  }^{\left(  \delta'-\delta_u\right)  q}  \left\Vert
u\right\Vert _{L^{\left(  1-\delta'+\delta_u\right)  q}\left(0,T;
H^{\frac{3}{2}-\delta_u}(\mathcal{D})\right)  }^{\left(  1-\delta'+\delta_u\right)
q}.
\end{align*}
This is finite if $  (1-\delta'+\delta_u)   q=p$, which gives
\[
q=\frac{p}{1-\delta'+\delta_u} = \frac{p}{\lambda + \delta_u}.
\]
Further, observe that $ \frac{3}{2}-\delta ' = \lambda + \frac{1}{2}$ by definition of $\lambda$. Thus, we have proved 
\[
\int_0^T \norm{u_t}_{H^{\lambda + \frac{1}{2}}(\mathcal{D})}^{\frac{p}{\lambda + \delta_u}} < \infty,
\]
which gives the thesis.
\end{proof}

To make sense of the mild solution, we next establish a space-time integrability result for the product $u_t \cdot  Z_t^\varepsilon$. Recall that, for any $\delta_\theta >0$, $\alpha_\theta = \frac{1}{4}+ \frac{\delta_\theta}{2}$ and $\beta_\theta = \frac{1}{4}-\frac{\delta_\theta}{4}$, we have introduced the stochastic processes 
\[
Z_t^\varepsilon = (-\Delta)^{\alpha_\theta} \xi_t^\varepsilon, 
\quad 
\xi_t^\varepsilon = \int_0^t (-\Delta)^{1-\alpha_\theta} e^{(t-r)\Delta} D dW_r.
\]
Further, if  
\[
\sum_k \lambda_k^2  \norm{(-\Delta)^{\beta_\theta} D e_k}_2^2 <\infty,
\]
then by Proposition \ref{prop: regularity for Z_t} we know that the trajectories of $
\xi^{\varepsilon}$ are in $C(0,T; L^2(\mathcal{D}))$.
\begin{lemma} \label{lemma: regolarita}
Let $0 < \lambda < 1$ and $\gamma > \frac{1}{4}$. Set
\[
\bar \delta_\theta (\lambda, \gamma) := \min \{ \lambda, 2\gamma - \frac{1}{2}\}.
\]
For any $0< \delta_\theta <  \bar{ \delta}_\theta $, any velocity field 
\[
u \in L^p(0,T; H^{\frac{3}{2}-\delta_u}(\mathcal{D})) \cap L^\infty (0,T;H^{\frac{1}{2}-\delta_u}(\mathcal{D})), \qquad p >1, \, \delta_u \in (0, 1- \lambda],
\]
we have 
\[
t\mapsto (-\Delta)^{-\gamma- \frac{1}{2}} \mathrm{div}\left(  u_{t}(-\Delta)^{\alpha_\theta }\xi_{t}^\varepsilon\right) \in L^{\frac{p}{\lambda+\delta_u}}\left( 0,T;L^{\frac{6}{5}}( \mathcal{D})\right), \qquad \alpha_\theta  = \frac{1}{4}+ \frac{\delta_\theta }{2}.
\]
Moreover,
\begin{equation}
\norm{(-\Delta)^{-\gamma- \frac{1}{2}} \mathrm{div}( u_t (-\Delta)^{\alpha_\theta} \xi_t^\varepsilon)}_{L^\frac{p}{\lambda + \delta_u }(0,T;L^{\frac{6}{5}}(\mathcal{D}))} \leq  C \norm{\xi^\varepsilon}_{C(0,T; L^2(\mathcal{D}))} \, \norm{u}_{L^{\frac{p}{\lambda + \delta_u }}(0,T; W^{\frac{1}{2}+\lambda,2}(\mathcal{D}))},
    \label{eq: norm estimate u Delta xi}
\end{equation}
where $C = C(\lambda,\gamma,\delta_\theta)>0$.
\end{lemma}
\begin{proof}
To simplify the notation, we set $\varepsilon = 1$ and drop the dependence on $\varepsilon$ of $\xi^\varepsilon$.
We start by recalling that, by \cite[Theorem A.1]{Behzadan} (which we can apply after extending functions on $\mathcal D$ to $\mathbb R^3$ via a bounded extension operator), the pointwise multiplication
\begin{equation*}
    \begin{aligned}
        W^{s_{1},p_{1}}\left(\mathcal{D} \right)\times W^{s_{2},p_{2}} \left(\mathcal{D} \right)
        &\hookrightarrow W^{s,p} \left(\mathcal{D} \right)\\
        (f,g) & \mapsto f g
    \end{aligned}
\end{equation*}
is bilinear and continuous if
\begin{enumerate}
    \item[(i)] $s_1 \geq s$ and $s_2 \geq s$;
    \item[(ii)] $s_{1}\geq s+3\left(\frac{1}{p_{1}}-\frac{1}{p}\right)$ and 
                $s_{2}\geq s+3\left(\frac{1}{p_{2}}-\frac{1}{p}\right)$;
    \item[(iii)] $s_{1}+s_{2}> s+3\left(  \frac{1}{p_{1}}+\frac{1}{p_{2}}-\frac{1} {p}\right)$;
    \item[(iv)] if $s_i=s\notin\mathbb N$ for some $i\in\{1,2\}$, then one must additionally assume $p_i\le p$.
\end{enumerate}

\textbf{Step 1}. As a first step of the proof, we show that
\[
\left\langle (-\Delta)^{-\gamma- \frac{1}{2}} 
\mathrm{div}\left(  u_{t}(-\Delta)^{\alpha_\theta }\xi_{t}\right),\phi\right\rangle 
\leq K_{t}\left\Vert \phi\right\Vert _{L^{6}\left(\mathcal{D} \right)},
\]
where the integrability in time of $t \mapsto K_t$ will be investigated in Step $2$. Here, $6$ is the conjugate exponent of $\frac{6}{5}$, $\phi \in L^6(\mathcal{D})$ is a test function, and $\alpha_\theta = \frac{1}{4}+ \frac{\delta_\theta}{2}$. Further, we denote by $\langle \cdot, \cdot \rangle$ the duality pairing. Since $\Delta$ is self-adjoint, by H\"older inequality and integration by parts, we obtain
\begin{align*}
\left\langle (-\Delta)^{-\gamma- \frac{1}{2}} \mathrm{div} 
\left(u_{t}(-\Delta)^{\alpha_\theta }\xi_{t}\right),\phi\right\rangle  
&  =\left\langle \mathrm{div}(u_{t}(-\Delta)^{\alpha_\theta}\xi_{t}),
(-\Delta)^{-\gamma- \frac{1}{2}}\phi\right\rangle \\
& = - \langle  u_t (-\Delta)^{\alpha _\theta}\xi_t, 
\nabla (-\Delta) ^{-\gamma - \frac{1}{2}} \phi \rangle \\
&  =-\left\langle (-\Delta)^{\alpha_\theta}\xi_{t},
u_{t} \cdot  \nabla (-\Delta)^{-\gamma- \frac{1}{2}}\phi\right\rangle \\
&  =-\left\langle \xi_{t},
(-\Delta)^{\alpha_\theta} \left(u_t \cdot\nabla  (-\Delta)^{-\gamma- \frac{1}{2}} \phi \right)\right\rangle\\
&  \leq\left\Vert \xi_{t}\right\Vert _{L^{2}\left(\mathcal{D} \right)}
\left\Vert (-\Delta)^{\alpha_\theta}
\left(u_{t} \cdot \nabla (-\Delta)^{-\gamma- \frac{1}{2}}\phi\right)  
\right\Vert _{L^{2}\left(\mathcal{D} \right)}\\
&  =\left\Vert \xi_{t}\right\Vert _{L^{2}\left(\mathcal{D} \right)}
\left\Vert u_{t}\cdot \nabla (-\Delta)^{-\gamma - \frac{1}{2}}\phi\right\Vert 
_{W^{2\alpha_\theta,2}\left(\mathcal{D} \right)}.
\end{align*}
Next, we show that 
$u_{t} \cdot \nabla (-\Delta)^{-\gamma - \frac{1}{2}}\phi\in 
W^{2\alpha_\theta,2}\left(\mathcal{D} \right)$. Since 
$2\alpha_\theta=\frac{1}{2}+\delta_\theta$, this is equivalent to
\begin{equation}
    u_{t}\cdot \nabla (-\Delta)^{-\gamma - \frac{1}{2}}\phi\in 
    W^{\frac{1}{2}+\delta_\theta,2}\left(\mathcal{D} \right).
    \label{eq: embedding auxiliary 1}
\end{equation}
By the norm equivalence between $W^{\frac{1}{2}+\lambda,2 }(\mathcal{D})$ 
and $H^{\frac{1}{2}+\lambda} (\mathcal{D})$, and by Corollary 
\ref{cor: interpolation regularity of u}, we have 
$u_{t}\in W^{\frac{1}{2}+\lambda,2}\left(\mathcal{D} \right)$ a.e., 
and since $\phi\in L^{6}\left(\mathcal{D} \right)$, we deduce 
$\nabla (-\Delta)^{-\gamma- \frac{1}{2}}\phi
\in W^{2\gamma,6}\left(\mathcal{D} \right)$. Thus 
\eqref{eq: embedding auxiliary 1} follows from the pointwise multiplication embedding 
\begin{equation*}
    \begin{aligned}
        W^{\frac{1}{2}+\lambda,2}\left(\mathcal{D} \right)
        \times W^{2\gamma,6}\left(\mathcal{D} \right)& 
        \hookrightarrow W^{\frac{1}{2}+\delta_\theta,2}\left(\mathcal{D} \right) \\
        (u_t, \nabla (-\Delta)^{-\gamma- \frac{1}{2}} \phi ) 
        &\mapsto u_t \cdot \nabla (-\Delta)^{-\gamma - \frac{1}{2}}
 \phi,    \end{aligned}
\end{equation*}
provided that the following conditions hold
\begin{enumerate}
    \item[(i)] $\frac{1}{2}+\lambda\geq\frac{1}{2}+\delta_\theta$ 
               and $2\gamma\geq\frac{1}{2}+\delta_\theta$;
    \item[(ii)] $\frac{1}{2}+\lambda\geq\frac{1}{2}+\delta_\theta
                 +3\left(  \frac{1}{2}-\frac{1}{2}\right)$ and 
                 $2\gamma\geq\frac{1}{2}+\delta_\theta
                 +3\left(  \frac{1}{6}-\frac{1}{2}\right)$;
    \item[(iii)] $\frac{1}{2}+\lambda+2\gamma >\frac{1}{2}+\delta_\theta
         +3\left(\frac{1}{2}+\frac{1}{6}-\frac{1}{2}\right)$.
\end{enumerate}
The previous conditions are equivalent to
\begin{enumerate}
    \item[(a)] $\delta_\theta\leq \lambda $ and 
               $ \delta_\theta \leq 2 \gamma - \frac{1}{2}$;
    \item[(b)] $ \delta_\theta \leq \frac{1}{2}+ 2 \gamma$;
    \item[(c)] $\delta_\theta < \lambda + 2\gamma - \frac{1}{2}.$
\end{enumerate}

Since in general $s=\frac{1}{2}+\delta_\theta\notin\mathbb N$ and here $p_2=6>p=2$, condition (iv) in
\cite[Theorem A.1]{Behzadan} forbids the borderline case $s_2=s$. Hence we impose
\[
s_2  > s \quad \Longleftrightarrow \quad 2\gamma> \frac{1}{2}+ \delta_\theta
\quad \Longleftrightarrow \quad \delta_\theta<2\gamma-  \frac{1}{2}.
\]
In contrast, the endpoint $\delta_\theta=\lambda$ is admissible since then
$s_1=s$ but $p_1=p=2$.

Therefore, we can choose
\begin{equation}
\bar{\delta}_\theta (\lambda, \gamma):= \min \left \lbrace  
\lambda, 2 \gamma - \frac{1}{2}\right \rbrace >0,
\label{eq: definiton of bar delta in Lemma 33}
\end{equation}
and note that $\bar{\delta}_\theta$ is positive since $\gamma> \frac{1}{4}$ 
and $\lambda \in (0,1).$ 

\textbf{Step 2}. Fix now $0<\delta_\theta\leq \bar{\delta}_\theta (\lambda,\gamma)$. The computations of the previous step yield
\begin{align*}
    \left\Vert (-\Delta)^{-\gamma- \frac{1}{2}} 
    \mathrm{div}\left(u_{t} (-\Delta)^{\alpha_\theta}\xi_{t}\right) 
    \right\Vert_{L^\frac{6}{5} \left(\mathcal{D} \right)} 
    &= \sup _{\|\phi\|_{L^6(\mathcal{D}) } \leq 1} 
    \left\langle (-\Delta)^{-\gamma - \frac{1}{2}} 
    \mathrm{div}\left(u_{t}(-\Delta)^{\alpha_\theta}\xi_{t}\right),\phi\right\rangle \\
    &  \leq C\left\Vert \xi_{t}\right\Vert _{L^{2}\left(\mathcal{D} \right)} \sup _{\|\phi\|_{6 } \leq 1} 
    \left\Vert u_{t}\cdot \nabla (-\Delta)^{-\gamma- \frac{1}{2}}\phi\right\Vert _{W^{2\alpha_\theta,2}\left(\mathcal{D} \right)} \\
    &\leq C\left\Vert \xi_{t}\right\Vert_{L^{2}\left(\mathcal{D} \right)}
    \left\Vert u_t \right\Vert_{W^{\frac{1}{2}+\lambda,2}\left(\mathcal{D} \right)}  \sup _{\|\phi\|_{6 } \leq 1} 
    \left\Vert \nabla (-\Delta)^{-\gamma- \frac{1}{2}}\phi \right\Vert
    _{W^{2\gamma,6}\left(\mathcal{D} \right)}\\
    & \leq C \left\Vert \xi_{t}\right\Vert_{L^{2}\left(\mathcal{D} \right)}
    \left\Vert u_t \right\Vert_{W^{\frac{1}{2}+\lambda,2} \left(\mathcal{D} \right)} \sup _{\|\phi\|_{6 } \leq 1} 
    \left\Vert \phi \right\Vert_{L^{6}\left(\mathcal{D} \right)}\\
    & \leq  C \left\Vert \xi_{t}\right\Vert_{L^{2}\left(\mathcal{D} \right)}
    \left\Vert u_t \right\Vert_{W^{\frac{1}{2}+\lambda,2} \left(\mathcal{D} \right)} ,
\end{align*}
where $C = C(\lambda, \gamma, \delta_\theta)$ is a constant from 
the multiplication results. Thus, up to positive constants, we obtain from H\"older's inequality that
\begin{align*}
 \int_0^T 
 \left\Vert (-\Delta)^{-\gamma- \frac{1}{2}}
 \mathrm{div}\left(u_{t}(-\Delta)^{\alpha_\theta}\xi_{t}\right) 
 \right\Vert_{L^\frac{6}{5} \left(\mathcal{D} \right)}^{\frac{p}{\lambda+\delta_u}}dt  
 &\leq  \int_0^T 
 \left(\left\Vert \xi_{t}\right\Vert_{L^{2}\left(\mathcal{D} \right)}
        \left\Vert u_t \right\Vert_{W^{\frac{1}{2}+\lambda,2}\left(\mathcal{D} \right)} 
 \right)^{\frac{p}{\lambda+\delta_u}}dt\\
 & =   \int_0^T \left\Vert \xi_{t}\right\Vert_{L^{2}\left(\mathcal{D} \right)}^{\frac{p}{\lambda+\delta_u}}
        \left\Vert u_t \right\Vert_{W^{\frac{1}{2}+\lambda,2}\left(\mathcal{D} \right)}^{\frac{p}{\lambda+\delta_u}}dt \\
 & \leq   \sup_{0\leq t \leq T} \left\Vert \xi_{t}\right\Vert_{L^{2}\left(\mathcal{D} \right)}^{\frac{p}{\lambda+\delta_u}} 
        \int_0^T \left\Vert u_t \right\Vert_{W^{\frac{1}{2}+\lambda,2}\left(\mathcal{D} \right)}^{\frac{p}{\lambda+\delta_u}}dt .
\end{align*}
Note that all the terms on the right-hand side are finite thanks to 
Proposition \ref{prop: regularity for Z_t}, 
Corollary \ref{cor: interpolation regularity of u}, 
and the norm equivalence between $W^{\frac{1}{2}+\lambda,2 }(\mathcal{D})$ and 
$H^{\frac{1}{2}+\lambda} (\mathcal{D})$; the last corollary can be applied 
since by hypothesis $\delta_u \leq 1- \lambda$.
\end{proof}

Recall that the mild solution of the remainder's problem \eqref{eq: zeta temperature only} has the form 
\[
\zeta_{t}^\varepsilon=e^{t\Delta }\theta _{0}-\int_{0}^{t}e^{\left(  t-r\right)  \Delta}\left(
u_{r}^\varepsilon\cdot\nabla\zeta_{r}^\varepsilon\right)  dr-\int_{0}^{t}e^{\left(  t-r\right)
\Delta}\left(  u_{r}^\varepsilon\cdot\nabla Z_{r}^\varepsilon\right)  dr.
\]
By the previous auxiliary results, we can deduce the regularity of the third term on the right-hand side of the previous identity.

\begin{corollary} \label{cor: csi continuo}
 Let $s \in (0, \frac{1}{2})$, $p>1$ and $\gamma > \frac{1}{4}$ be given. Assume that
\begin{equation}
    a(s,\gamma):=\frac{1}{2}+ \gamma + \frac{s}{2} < 1.
    \label{eq: condition cor csi continuo}
\end{equation}
Let $\lambda  \in (0,1)$ such that
\begin{equation}
\lambda < p ( 1- a(s,\gamma)).
\label{eq: secon condition csi continuo}
\end{equation}
Fix $0 <  \delta_\theta <  \bar \delta _\theta = \min \{ \lambda, 2 \gamma - \frac{1}{2} \},$ and consider $\alpha_\theta := \frac{1}{4}+ \frac{\delta_\theta }{2}.$ Define
\[
\bar \delta _u( p, s, \gamma, \lambda) := \min \{ p (1-a(s, \gamma))- \lambda, 1- \lambda \} >0.
\]
Then, for every $\delta_u \in (0, \bar \delta_ u)$, and every velocity field 
\[
u\in L^p(0,T;H^{\frac32-\delta_u}_\sigma(\mathcal D))
\cap L^\infty(0,T;H^{\frac12-\delta_u}_\sigma(\mathcal D)),
\]
the function
\[
\chi_t:=\int_0^t e^{(t-r)\Delta}\bigl(u_r\cdot\nabla Z_r^\varepsilon\bigr)\,dr
\]
belongs to $C(0,T;W^{s,\frac65}(\mathcal D))$ and satisfies
\begin{equation}
\|\chi^\varepsilon\|_{C(0,T;W^{s,\frac65})}
\le C\,\|\xi^\varepsilon\|_{C(0,T;L^2)}\,
\|u\|_{L^{\frac{p}{\lambda+\delta_u}}(0,T;W^{\frac12+\lambda,2})},
\label{eq: estimate norm xi}
\end{equation}
where $C=C(s,p,\gamma,\lambda,\delta_\theta,T)>0$. Lastly, given $s \in (0, \frac{1}{2}),p >1$ as before, there always exists $\gamma = \gamma(s)> \frac{1}{4}$ and $\lambda = \lambda (s,p) \in (0,1)$ such that \eqref{eq: condition cor csi continuo} and \eqref{eq: secon condition csi continuo} hold.
\end{corollary}
\begin{proof}
To simplify the notation, we set $\varepsilon = 1$ and drop the dependence of $\varepsilon$ of $\xi^\varepsilon$.

\textbf{Step 1.} Fix $\delta_\theta >0$ such that $ 0 < \delta_\theta < \bar \delta_\theta = \min \{ \lambda , 2 \gamma - \frac{1}{2} \}$. Define the function
\[
f_\lambda  (\delta_u) = ( \gamma+ \frac{s}{2}+ \frac{1}{2}) \frac{1}{1- \frac{\lambda}{p}- \frac{\delta_u}{p}} = \frac{a(s, \gamma)}{1- \frac{\lambda + \delta_u}{p}}, \quad \lambda \geq 0.
\]
If $\delta_u < \bar \delta _u$, then $\lambda + \delta_u < p (1-a(s,\gamma))$, which implies 
\begin{equation}
     f_\lambda(\delta_u ) =\bigl(\gamma + \tfrac{s}{2} + \tfrac12\bigr)\,\frac{1}{1 - \frac{\lambda}{p} - \frac{\delta_u}{p}} < 1.
    \label{eq: auxiliary eq lambda delta}
\end{equation}

 \textbf{Step 2.} Thanks to the commuting properties of the fractional Laplacian with the heat semigroup and the divergence free condition of $u$, it holds
\begin{align*}
\chi_{t}  &  =(-\Delta)^{-\frac{s}{2}} \rho_{t},\\
\rho_{t}  &  =\int_{0}^{t} (-\Delta)^{\gamma+\frac{s}{2}+\frac{1}{2}}e^{\left(
t-r\right)  \Delta} (-\Delta)^{-\gamma- \frac{1}{2}}\mathrm{div}\left(  u_{r} (-\Delta)^{\alpha_\theta }\xi_{r}\right)  dr.
\end{align*}
By the estimate
\[
\norm{ (-\Delta)^\varsigma e^{(t-r)\Delta} y}_{L^\frac{6}{5}(\mathcal{D})} \leq \frac{c}{(t-r)^\varsigma} \norm{y}_{L^\frac{6}{5}(\mathcal{D})},
\]
where $\varsigma:= \gamma+\frac{s}{2}+\frac{1}{2}$ and $c=c(\varsigma)$ is a positive constant, 
we get
\begin{equation}
    \begin{split}
        \left\Vert \rho_{t}\right\Vert _{L^{\frac{6}{5}}(\mathcal{D})}  &  \leq\int_{0}^{t}\frac{c}{\left(
t-r\right)  ^{\gamma+\frac{s}{2}+\frac{1}{2}}}\left\Vert (-\Delta)^{-\gamma- \frac{1}{2}} \mathrm{div}\left(
u_{r}(-\Delta)^{\alpha_\theta }\xi_{r}\right)  \right\Vert _{L^{\frac{6}{5}}(\mathcal{D})}dr\\
&  \leq\left(  \int_{0}^{t}\left(  \frac{c}{\left(  t-r\right)  ^{\gamma
+\frac{s}{2}+\frac{1}{2}}}\right)  ^{\frac{p}{p-\lambda-\delta_u}%
}dr\right)  ^{\frac{p-\lambda-\delta_u}{p}}\left(  \int_{0}^{t}\left\Vert
(-\Delta)^{-\gamma- \frac{1}{2}} \mathrm{div}\left(  u_{r}(-\Delta)^{\alpha_\theta }\xi_{r}\right)  \right\Vert _{L^{\frac{6}{5}}(\mathcal{D})}%
^{\frac{p}{\lambda+\delta_u}}dr\right)  ^{\frac{\lambda+\delta_u}{p}}.
    \end{split}
    \label{eq: estimate rho}
\end{equation}
The second term is finite thanks to Lemma \ref{lemma: regolarita} (that we can apply since $\lambda  \leq 1-\delta_u $ and $\delta_\theta \in (0, \bar \delta_\theta]$), while the first term is finite thanks to \eqref{eq: auxiliary eq lambda delta}. Taking the supremum over $t \in [0,T]$ in \eqref{eq: estimate rho}, using Lemma \ref{lemma: regolarita} to bound the second term on the right-hand side of \eqref{eq: estimate rho}, and the boundedness of $(-\Delta)^{-s/2} \colon L^{6/5} \to W^{s, 6/5}$, we obtain \eqref{eq: estimate norm xi}.

\textbf{Step 3.} We now prove that \(t \mapsto \rho_t\) is continuous as a map \((0,T) \to L^{\frac{6}{5}}(\mathcal{D})\).  
For notational convenience, write
\[
\rho_t = \int_0^t (-\Delta)^{\gamma + \frac{s}{2} + \frac{1}{2}} e^{(t-\tau)\Delta} g_\tau \, d\tau, 
\qquad 
g_\tau := (-\Delta)^{-\gamma - \frac{1}{2}} \mathrm{div}\bigl(u_\tau (-\Delta)^{\alpha_\theta }\xi_\tau\bigr),
\]
so that \(g \in L^{\frac{p}{\lambda+\delta_u}}(0,T;L^{\frac{6}{5}}(\mathcal{D}))\) by Lemma~\ref{lemma: regolarita}.  
Fix \(r \in (0,T]\) and let \(t \in [r,T]\). Then
\begin{align*}
\rho_t - \rho_r
&= \int_0^t (-\Delta)^{\gamma + \frac{s}{2} + \frac{1}{2}} e^{(t-\tau)\Delta} g_\tau \, d\tau
   - \int_0^r (-\Delta)^{\gamma + \frac{s}{2} + \frac{1}{2}} e^{(r-\tau)\Delta} g_\tau \, d\tau \\
&= \int_r^t (-\Delta)^{\gamma + \frac{s}{2} + \frac{1}{2}} e^{(t-\tau)\Delta} g_\tau \, d\tau
   + \int_0^r \Bigl[(-\Delta)^{\gamma + \frac{s}{2} + \frac{1}{2}} e^{(t-\tau)\Delta}
                   - (-\Delta)^{\gamma + \frac{s}{2} + \frac{1}{2}} e^{(r-\tau)\Delta}\Bigr] g_\tau \, d\tau \\
&=: I_1(t,r) + I_2(t,r).
\end{align*}
Using the estimate for the fractional powers as in 
\eqref{eq: fractional power estimate for bound I2} and H\"older's inequality with 
\(q := \frac{p}{\lambda+\delta_u} > 1\), \(q' = \frac{q}{q-1}\), we obtain
\[
\|I_1(t,r)\|_{L^{\frac{6}{5}}}
\le C \Bigl( \int_r^t (t-\tau)^{-\bigl(\gamma + \frac{s}{2} + \frac{1}{2}\bigr) q'} \, d\tau \Bigr)^{1/q'}
      \Bigl( \int_r^t \|g_\tau\|_{L^{\frac{6}{5}}}^q \, d\tau \Bigr)^{1/q},
\]
for $C = C(\gamma, s, p, \lambda, \delta_u)>0$. The first factor $I_1$ is bounded uniformly in \(t \in [r,T]\) thanks to condition
\eqref{eq: auxiliary eq lambda delta}, while the second factor tends to \(0\) as \(t \downarrow r\) by the absolute continuity of the integral, since \(g \in L^q(0,T;L^{\frac{6}{5}}(\mathcal{D}))\). Hence \(\|I_1(t,r)\|_{L^{\frac{6}{5}}} \to 0\) as \(t \downarrow r\).

For the second term $I_2$, for every fixed \(\tau \in (0,r)\) the map
\[
\theta \mapsto (-\Delta)^{\gamma + \frac{s}{2} + \frac{1}{2}} e^{\theta \Delta} g_\tau,
\qquad \theta > 0,
\]
is continuous in \(L^{\frac{6}{5}}(\mathcal{D})\) by strong continuity of the heat semigroup; therefore the integrand in \(I_2(t,r)\) converges to \(0\) in \(L^{\frac{6}{5}}\) as \(t \downarrow r\) for a.e.\ \(\tau \in (0,r)\). Moreover, using again the estimate for the fractional power \eqref{eq: fractional power estimate for bound I2}, for \(t\) close to \(r\) we have
\[
\|(-\Delta)^{\gamma + \frac{s}{2} + \frac{1}{2}} e^{(t-\tau)\Delta} g_\tau
      - (-\Delta)^{\gamma + \frac{s}{2} + \frac{1}{2}} e^{(r-\tau)\Delta} g_\tau \|_{L^{\frac{6}{5}}}
\leq C (r-\tau)^{-\gamma - \frac{s}{2} - \frac{1}{2}} \|g_\tau\|_{L^{\frac{6}{5}}},
\]
and the right-hand side is integrable on \((0,r)\) by the same computation as in
\eqref{eq: estimate rho}. Hence, by the dominated convergence theorem,
\(\|I_2(t,r)\|_{L^{\frac{6}{5}}} \to 0\) as \(t \downarrow r\).  
We deduce that \(\|\rho_t - \rho_r\|_{L^{\frac{6}{5}}(\mathcal{D})} \to 0\) for every \(r \in (0,T]\), that is
\(\rho \in C(0,T; L^{\frac{6}{5}}(\mathcal{D}))\). Consequently,
\(\chi_t = (-\Delta)^{-\frac{s}{2}} \rho_t\) belongs to
\(C (0,T; W^{s,\frac{6}{5}}(\mathcal{D}))\).

\textbf{Step 4.} Existence of  $\gamma, \lambda $, given $s,p$. 
 Assume that, as before, $s \in (0,\frac{1}{2})$ and $p>1$ are given. We now check that we can always find $\gamma = \gamma(s)> \frac{1}{4}$ and $\lambda = \lambda (s,p)$ such that \eqref{eq: condition cor csi continuo} and \eqref{eq: secon condition csi continuo} are satisfied, and thus this corollary is non-trivial. Indeed, with the notation of Step $1$, from $a (\frac{1}{4}) = \frac{3}{4}+ \frac{s}{2}<1$, it follows that there exists $\gamma = \gamma (s) > \frac{1}{4}$ sufficiently close to $\frac{1}{4}$ such that $a(s,\gamma) <1$, that is \eqref{eq: condition cor csi continuo}. Hence, also $p (1-a) >0$ holds, and we may choose $\lambda = \lambda (s,p) \in (0,1)$ so small that $\lambda < p(1-a),$ that is \eqref{eq: secon condition csi continuo}. 
\end{proof}
\begin{remark}
From the proof of Corollary \ref{cor: csi continuo}, it follows that if 
    $s\in (0, \frac{1}{2}), p>1$, $\gamma > \frac{1}{4}$ and $\lambda\in (0,1)$ satisfy 
    \eqref{eq: condition cor csi continuo} and \eqref{eq: secon condition csi continuo}, then
    \[
    \bigl(\gamma + \tfrac{s}{2}+ \tfrac{1}{2}\bigr)\,
    \frac{1}{1- \frac{\lambda}{p}- \frac{\delta_u}{p} } <1
    \]
    for any $\delta_u < \bar \delta_u = \min \{p (1-a (s,\gamma))- \lambda, 1- \lambda \}$.
    \label{remark: condizione 3.9 vecchia dimostrazione}
\end{remark}

Lastly, we show the proof of Theorem \ref{thm: existence mild solution zeta}.
\begin{proof}[Proof of Theorem \ref{thm: existence mild solution zeta}]
For simplicity we set $\varepsilon=1$ and drop the superscript, writing $\zeta$ and $Z$ instead of $\zeta^\varepsilon$ and $Z^\varepsilon$. Let $s \in (0, \frac{1}{2}) $ and $p>4$ be given.

\textbf{Step 1.} Setup of parameters and auxiliary embeddings.
We work with the parameters $\gamma,\lambda,\bar\delta_\theta,\bar\delta_u$
fixed in the statement of the theorem. In particular,
\[
a(s,\gamma):=\frac{1}{2}+\gamma+\frac{s}{2}<1, \qquad
\lambda_{\max}:=p(1-a(s,\gamma)),
\]
and, by \eqref{eq:lmax-lmin}, we have $\lambda_{\max}>\frac12-s$. Moreover, $
\lambda_{\max}-(1-2s)>0 $
by the choice of $\varepsilon_\gamma$ in \eqref{eq:param-gamma} as shown in \eqref{eq: lambda max minus 12s positive}. We have set
\[
\lambda:= (\frac{1}{2}-s)+\varepsilon_\lambda,
\qquad 0<\varepsilon_\lambda<\min\{2s, \lambda_{\max}-(1-2s)\}.
\]
Note that
\[
\lambda<(\tfrac{1}{2}-s)+2s=\tfrac{1}{2}+s<1,
\qquad \lambda<(\tfrac{1}{2}-s)+\lambda_{\max}-(1-2s) =\lambda_{\max}-(\tfrac{1}{2}-s)<\lambda_{\max}.
\]
Hence $\lambda\in(\tfrac{1}{2}-s,\min\{1,\lambda_{\max}\})$. The thresholds are
\[
\bar{\delta}_\theta=\min\left \lbrace \lambda, 2\gamma-\frac{1}{2}\right \rbrace ,
\qquad \bar{\delta}_u=\min\left \lbrace \lambda_{\max}-\lambda,1-\lambda \right \rbrace.
\]
Finally, fix $0<\delta_\theta < \bar{\delta}_\theta$ and $0<\delta_u<\bar{\delta}_u$.

\emph{Choice of $q,q'$.} Set
\[
q:=\frac{6}{5-2s},\qquad \frac{1}{q}+\frac{1}{q'}=1.
\]
Then the Sobolev embedding
\begin{equation}\label{eq: first condition for q for thm 2.2}
W^{s,\frac{6}{5}}(\mathcal D)\hookrightarrow L^{q}(\mathcal D)
\end{equation}
holds, and a direct computation gives
\begin{equation}\label{eq: id_qprime}
1-\frac{3}{q'}=\frac12-s.
\end{equation}

\emph{Product embedding.} Since $\gamma>\frac14$, the Sobolev embedding yields
\begin{equation}
W^{2\gamma,6}(\mathcal D)\hookrightarrow L^\infty(\mathcal D).
\label{eq:W26 to Linfty}
\end{equation}
Moreover, again by the Sobolev embedding,
\begin{equation}
W^{\frac12+\lambda,2}(\mathcal D)\hookrightarrow L^{r_\lambda}(\mathcal D), \qquad r_\lambda:=\frac{3}{1-\lambda}.
\label{eq:W12l2 to Lr}
\end{equation}
Recalling that $
q'=\tfrac{6}{1+2s},$ it can be checked that $r_\lambda \geq q'$ is equivalent to $\lambda \geq \tfrac{1}{2}-s$. Hence, by \eqref{eq:W12l2 to Lr} and the embedding $L^{r_\lambda}(\mathcal D)\hookrightarrow L^{q'}(\mathcal D)$ on bounded $\mathcal D$,
\begin{equation}
W^{\frac12+\lambda,2}(\mathcal D)\hookrightarrow L^{q'}(\mathcal D).
\label{eq:W12l2 to Lq prime}
\end{equation}

Combining \eqref{eq:W26 to Linfty}, \eqref{eq:W12l2 to Lq prime} with H\"older's inequality yields the continuous multiplication map
\begin{equation}
W^{\frac12+\lambda,2}(\mathcal D)\times W^{2\gamma,6}(\mathcal D)
\hookrightarrow L^{q'}(\mathcal D).
\label{eq: coniditon multiplication embedding lambda thm 2.2}
\end{equation}
In particular, for every $\phi\in L^6(\mathcal D)$,
\begin{equation}
\bigl\|u_t\cdot\nabla(-\Delta)^{-\gamma-\frac12}\phi\bigr\|_{L^{q'}(\mathcal D)}
\le C\,\|u_t\|_{W^{\frac12+\lambda,2}(\mathcal D)}\,\|\phi\|_{L^6(\mathcal D)}.
\label{eq: mult_bound_for_phi}
\end{equation}

\emph{Time integrability condition.} Since $\delta_u<\bar\delta_u\le 1-\lambda$,
we have $\lambda\le 1-\delta_u$ and therefore Corollary~\ref{cor: interpolation regularity of u}
implies
\begin{equation}\label{eq: u_time_space_reg}
u\in L^{\frac{p}{\lambda+\delta_u}}\bigl(0,T;W^{\frac12+\lambda,2}(\mathcal D)\bigr).
\end{equation}
Moreover, $\delta_u<\bar\delta_u\le \lambda_{\max}-\lambda$ gives
$\lambda+\delta_u<\lambda_{\max}=p(1-a(s,\gamma))$, and hence, by
Remark~\ref{remark: condizione 3.9 vecchia dimostrazione},
\begin{equation}\label{eq: vecchia condizione 3.9 per thm 2.2}
\Bigl(\gamma+\frac{s}{2}+\frac{1}{2}\Bigr)\frac{1}{1-\frac{\lambda}{p}-\frac{\delta_u}{p}}<1.
\end{equation}

\textbf{Step 2.} Fixed point argument. Observe that, using the notation introduced in Corollary \ref{cor: csi continuo}, we can write $\zeta_t$ in the form
\[
\zeta_{t}=e^{t \Delta}\theta_0-(-\Delta)^{-\frac{s}{2}}  \int_{0}^{t}(-\Delta)^{\gamma+\frac{s}{2}+\frac{1}{2}
}e^{\left(  t-r \right)  \Delta} (-\Delta)^{-\gamma- \frac{1}{2}}\mathrm{div}\left(  u_{r}\zeta_{r}\right)  dr-\chi
_{t},
\] 
Let $X:= C\left(0,T_1; W^{s,\frac{6}{5}} (\mathcal{D})\right)$, where $T_1 \leq T$ will be chosen later, and for $\zeta \in X$ consider $\Psi\left(\zeta\right):= \tilde{\zeta}$, where $\tilde{\zeta}$ is defined as follows
\begin{equation}
\tilde{\zeta}_t:= e^{t \Delta}\theta_{0}-(-\Delta)^{-\frac{s}{2}}  \int_{0}^{t}(-\Delta)^{\gamma+\frac{s}{2}+\frac{1}{2}
}e^{\left(  t-r\right)  \Delta}(-\Delta)^{-\gamma - \frac{1}{2}}\mathrm{div}\left(  u_{r}\zeta_{r}\right)  dr-\chi
_{t}.
\label{eq: definition tilde zeta}
\end{equation}
We are going to apply a fixed point argument to obtain the thesis. First, since $\chi_t \in X$ thanks to Corollary \ref{cor: csi continuo}, to show that $\Psi(\zeta) \in X$ it is sufficient to prove that the linear application
\[
\Lambda\left(\zeta\right)_{t}:=\int_{0}^{t}(-\Delta) ^{\gamma+\frac{s}{2}+\frac{1}{2}}e^{\left(t-r\right)\Delta }(-\Delta) ^{-\gamma- \frac{1}{2}}\mathrm{div}\left(u_{r}\zeta_{r}\right)dr
\]
maps $X$ to $C(0,T_1;L^{\frac{6}{5}}(\mathcal{D}))$. Second, we will show that $\Psi\colon X \to X$ is a contraction. We divide the remaining part of the proof into three steps.

\textbf{Step 3.}
We want to show that, for any fixed $0 < \delta_u < \bar \delta_u (s,p)$ and any
$u$ as in the statement of the theorem, it holds
\begin{equation}
(-\Delta)^{-\gamma - \frac{1}{2}}\mathrm{div} \left(u_{t}\zeta_{t}\right)
\in L^{\frac{p}{\lambda+\delta_u}}\bigl(0,T_1;L^{\frac{6}{5}} (\mathcal{D})\bigr).
\label{eq: claim step 3 thm 22}
\end{equation}
Let $\phi \in L^{6}(\mathcal{D})$. As in the proof of Lemma~\ref{lemma: regolarita},
using integration by parts and the self-adjointness of $\Delta$, we obtain
\begin{align*}
\left\langle (-\Delta)^{-\gamma - \frac{1}{2}} \mathrm{div}\left(u_{t}\zeta_t\right),\phi\right\rangle
&= \left\langle \mathrm{div} (u_t \zeta_t), (-\Delta) ^{- \gamma -  \frac{1}{2}}  \phi \right\rangle \\
&= - \left\langle u_t \zeta_t , \nabla (-\Delta)^{- \gamma - \frac{1}{2}} \phi \right\rangle
 = - \left\langle \zeta_t , u_t \cdot \nabla (-\Delta)^{- \gamma - \frac{1}{2}} \phi \right\rangle .
\end{align*}
Let $q,q'$ be the conjugate exponents introduced in Step~1, namely
$q=\frac{6}{5-2s}$ and $1/q+1/q'=1$. By H\"older's inequality,
\[
\bigl|\left\langle \zeta_t , u_t \cdot \nabla (-\Delta)^{- \gamma - \frac{1}{2}} \phi \right\rangle\bigr|
\le \|\zeta_t\|_{L^{q}(\mathcal D)}\;
\|u_t \cdot \nabla (-\Delta)^{- \gamma - \frac{1}{2}} \phi\|_{L^{q'}(\mathcal D)}.
\]
Using the product estimate \eqref{eq: mult_bound_for_phi} from Step~1, we infer
\[
\|u_t \cdot \nabla (-\Delta)^{- \gamma - \frac{1}{2}} \phi\|_{L^{q'}(\mathcal D)}
\le C\,\|u_t\|_{W^{\frac12+\lambda,2}(\mathcal D)}\,\|\phi\|_{L^6(\mathcal D)},
\]
and therefore
\[
\bigl|\left\langle (-\Delta)^{-\gamma - \frac{1}{2}} \mathrm{div}\left(u_{t}\zeta_t\right),\phi\right\rangle\bigr|
\le C\,\|\zeta_t\|_{L^{q}(\mathcal D)}\;\|u_t\|_{W^{\frac12+\lambda,2}(\mathcal D)}\;\|\phi\|_{L^6(\mathcal D)}.
\]
Taking the supremum over $\|\phi\|_{L^6}\le 1$ yields
\begin{equation}\label{eq:step3_pointwise_bound}
\|(-\Delta)^{-\gamma - \frac{1}{2}} \mathrm{div}(u_t\zeta_t)\|_{L^{\frac{6}{5}}(\mathcal D)}
\le C\,\|\zeta_t\|_{L^{q}(\mathcal D)}\;\|u_t\|_{W^{\frac12+\lambda,2}(\mathcal D)}.
\end{equation}
Since $\zeta\in X=C(0,T_1;W^{s,\frac65}(\mathcal D))$, the Sobolev embedding
\eqref{eq: first condition for q for thm 2.2} implies
$\sup_{t\in[0,T_1]}\|\zeta_t\|_{L^{q}}<\infty$. Hence, raising
\eqref{eq:step3_pointwise_bound} to the power $\frac{p}{\lambda+\delta_u}$ and integrating,
we obtain
\begin{equation}
\int_0^{T_1}\|(-\Delta)^{-\gamma - \frac{1}{2}} \mathrm{div}(u_t\zeta_t)\|_{L^{\frac{6}{5}}}^{\frac{p}{\lambda+\delta_u}}\,dt
\le C\Bigl(\sup_{t\in[0,T_1]}\|\zeta_t\|_{L^q}\Bigr)^{\frac{p}{\lambda+\delta_u}}
\int_0^{T_1}\|u_t\|_{W^{\frac12+\lambda,2}}^{\frac{p}{\lambda+\delta_u}}\,dt.
\label{eq: bound norm u tau in Lp step one fixed point tau}
\end{equation}
The last integral is finite by \eqref{eq: u_time_space_reg}, i.e.
$u\in L^{\frac{p}{\lambda+\delta_u}}(0,T_1;W^{\frac12+\lambda,2}(\mathcal D))$.
This proves \eqref{eq: claim step 3 thm 22}

\textbf{Step 4.} Repeating the same arguments in Corollary \ref{cor: csi continuo}, we obtain $\Lambda\left(\zeta\right)_{t} \in C\left(0,T;L^{\frac{6}{5}} (\mathcal{D})\right)$, and thus $\Psi(\zeta) =\tilde{\zeta}\in X$.

\textbf{Step 5.} We conclude the proof by showing that $\Psi$ is a contraction in
$X=C(0,T_1;W^{s,\frac{6}{5}}(\mathcal D))$ for $T_1>0$ sufficiently small.
Let $\zeta_1,\zeta_2\in X$ and set $\zeta:=\zeta_1-\zeta_2$. From
\[
\Psi(\zeta_i)=e^{t\Delta}\theta_0-(-\Delta)^{-\frac{s}{2}}\Lambda(\zeta_i)_t-\chi_t,
\]
we deduce that, for every $t\in[0,T_1]$,
\[
\Psi(\zeta_1)(t)-\Psi(\zeta_2)(t)
=-(-\Delta)^{-\frac{s}{2}}\int_0^t (-\Delta)^{\gamma+\frac{s}{2}+\frac{1}{2}}
e^{(t-r)\Delta} (-\Delta)^{-\gamma-\frac{1}{2}}\mathrm{div}\bigl(u_r  \zeta_r\bigr)\,dr.
\]
Set
\[
\sigma:=\gamma+\frac{s}{2}+\frac{1}{2},\qquad
h(r):=(-\Delta)^{-\gamma-\frac{1}{2}}\mathrm{div}(u_r   \zeta_r),
\]
and recall that $q:=\frac{p}{\lambda+\delta_u}$ and $q'=\frac{q}{q-1}$. Using the smoothing estimate for the heat semigroup, we obtain for every $t\in[0,T_1]$
\begin{equation*}
    \begin{split}
        \|\Psi(\zeta_1)(t)-\Psi(\zeta_2)(t)\|_{W^{s,\frac{6}{5}}(\mathcal{D})}
&\leq C\int_0^t \|(-\Delta)^{\sigma}e^{(t-r)\Delta}h(r)\|_{L^{\frac{6}{5}}(\mathcal{D})}\,   dr\\
&\leq C \int_0^t (t-r)^{-\sigma} \|h(r)\|_{L^{\frac{6}{5}}(\mathcal{D})}\, dr.
    \end{split}
\end{equation*}
Applying H\"older's inequality in time with exponents $q,q'$, we get
\begin{align*}
\|\Psi(\zeta_1)(t)-\Psi(\zeta_2)(t)\|_{W^{s,\frac{6}{5}}}
&\leq C\left(\int_0^t (t-r)^{-\sigma q'}\,dr\right)^{\frac{1}{q'}}
\left(\int_0^t \|h(r)\|_{L^{\frac{6}{5}}}^{q}\,dr\right)^{\frac{1}{q}}\\
&\le C\left(\int_0^{T_1} \tau^{-\sigma q'}\,d\tau\right)^{\frac{1}{q'}}
\|h\|_{L^{q}(0,T_1;L^{\frac{6}{5}}(\mathcal D))}.
\end{align*}
The integral in $\tau$ is finite thanks to condition
\eqref{eq: vecchia condizione 3.9 per thm 2.2}, i.e.
$\sigma q'<1$. Hence,
\begin{equation}
\|\Psi(\zeta_1)-\Psi(\zeta_2)\|_{X}
\leq C (\int_0^{T_1} \tau^{-\sigma q'}\,d  \tau)^{\frac{1}{q'}}
\|h\|_{L^{q}(0,T_1;L^{\frac{6}{5}}( \mathcal{D}  ) )}.
\label{eq:Step5_kernel_bound}
\end{equation}
Finally, repeating the argument of Step3 in \eqref{eq:step3_pointwise_bound}-\eqref{eq: bound norm u tau in Lp step one fixed point tau} we obtain
\begin{equation*}
\|h\|_{L^{q}(0,T_1;L^{\frac{6}{5}}(\mathcal{D}))}
\leq C \|u\|_{L^{\frac{p}{\lambda+\delta_u}}\left(0,T_1;W^{\frac{1}{2}+\lambda,2}(\mathcal{D})\right)}
\|\zeta\|_{X}.
\end{equation*}
Substituting into \eqref{eq:Step5_kernel_bound} yields
\begin{equation}
\|\Psi(\zeta_1)-\Psi(\zeta_2)\|_{X} \leq K(T_1)
\|u\|_{L^{\frac{p}{\lambda+\delta_u}}(0,T_1;W^{\frac{1}{2}+\lambda,2}(\mathcal{D}))}
   \|\zeta_1-\zeta_2\|_{X},
\label{eq: estimate Psi}
\end{equation}
where
\[
K(T_1):=C\left(\int_0^{T_1}\tau^{-\sigma q'}\, d\tau \right)^{\frac{1}{q' }}.
\]
Since $u\in L^{\frac{p}{\lambda+\delta_u}}(0,T;W^{\frac{1}{2}+\lambda,2}(\mathcal D))$,
we have
$\|u\|_{L^{\frac{p}{\lambda+\delta_u}}(0,T_1;W^{\frac{1}{2}+\lambda,2})}\to 0$
as $T_1\downarrow 0$, and moreover $K(T_1)<\infty$ for every $T_1>0$ by $\sigma q'<1$. Therefore, choosing $T_1>0$ sufficiently small makes the right-hand
side of \eqref{eq: estimate Psi} strictly less than $\|\zeta_1-\zeta_2\|_X$, so $\Psi$
is a contraction on $X$. This yields a unique fixed point
$\zeta\in C(0,T_1;W^{s,\frac{6}{5}}(\mathcal {D
}))$.
If $T_1<T$, we may iterate the fixed point argument finitely many times to cover
$[0,T]$.
\end{proof}

\begin{remark}
    Note that Corollary \ref{cor: estimate tilde zeta} follows by applying to the definition of $\tilde{\zeta}_t$ in \eqref{eq: definition tilde zeta} the same arguments that were used to prove \eqref{eq: estimate norm xi} and \eqref{eq: bound norm u tau in Lp step one fixed point tau}.
\end{remark}

\section{Velocity problem}
\label{sec: velocity problem}

Let $\delta_u>0$ be a small regularity exponent for the velocity. In this section, we consider the velocity problem \eqref{eq: 3D NSEs velocity only}, assuming that the temperature is a given forcing term $ f^\theta \in L^p (0,T; H^{-\frac{1}{2}-\delta_u} (\mathcal{D}))$. By exploiting the weak-Stokes operator $A_w \colon \prescript{}{0}{H}^{\frac{3}{2}-\delta_u}_\sigma (\mathcal{D}) \to \prescript{}{0}{H^{- \frac{1}{2}-\delta_u}_\sigma  (\mathcal{D})}$ introduced in \eqref{eq: definition weak stokes operator} as 
\[
\langle  A_w u, v \rangle =  \langle \nabla u, \nabla v \rangle_{{H^{\frac{1}{2}-\delta_u }}, {H^{- \frac{1}{2}+ \delta_u} }},    
\quad (u,v) \in \prescript{}{0}{H}^{\frac{3}{2}-\delta_u}_\sigma (\mathcal{D}) 
\times \prescript{}{0}{H^{\frac{1}{2}+\delta_u}_\sigma(\mathcal{D})},
\]
then the velocity problem can be reformulated as a Cauchy problem in $\prescript{}{0}{H^{- \frac{1}{2}-\delta_u}_\sigma  (\mathcal{D})}$ as follows
\begin{equation}
\left \lbrace
    \begin{aligned}
        \partial_t u + A_w  u &= P(- u \cdot  \nabla u +f^\theta ),  & t \in (0,T), \\
        u|_{t = 0} &= u_0, &
    \end{aligned}
    \right.
    \label{eq: NSEs with force}
\end{equation}
where $P$ denotes the Helmholtz projection, and $u_0 \in V_{ p}^{\delta_u}$ denotes the initial condition, with
\[
V_{p}^{\delta_u} = \left( \prescript{}{0}H^{-\frac{1}{2}-\delta_u }_\sigma (\mathcal{D}), \prescript{}{0} H^{\frac{3}{2}-\delta_u}_\sigma  (\mathcal{D}) \right)_{1-\frac{1}{p}, p}.
\]
Note that here, and in the remainder of the section, we drop the dependence of $u$ on $\varepsilon$.

In Section \ref{subsec: stokes operator in weak asetting and max reg}, we investigate the properties of the weak Stokes operator $A_w$, in particular the $\mathcal{H}^\infty $-calculus and maximal regularity. Then, in Section \ref{subsec: global for small data} we apply these results to prove a global for small-data result for \eqref{eq: NSEs with force}, according to the mild notion of solution stated in Definition \ref{def: mild solution weak sense 3D NSEs}.

\subsection{Stokes operator in weak setting, $\mathcal{H}^\infty$-calculus and maximal regularity}
\label{subsec: stokes operator in weak asetting and max reg}

We start this section by recalling how the weak Stokes operator $A_w$ inherits the property of a bounded $\mathcal{H}^\infty$-calculus with $\mathcal{H}^\infty$-angle $\phi^\infty_{A_{w}} =0$ from the (strong) Stokes operator $A$. This is obtained by following the approach in \cite[Section 5]{Pruss2018}.

From this, it follows the maximal regularity property for the (weak) Stokes operator and the mixed derivative theorem, which are summarised in Proposition \ref{prop: max regularity stokes operator negative space} and Lemma \ref{lem: mixed derivative for our case}, respectively. Then, we conclude the section showing how it is possible to estimate the non-linear convective term $u \cdot \nabla u$ in $L^p(0,T; \prescript{}{0}{H}^{-\frac{1}{2}-\delta_u}_\sigma (\mathcal{D}))$ using the maximal regularity space $\mathbb{E}_{T,p}^{\delta_u}$, for any $p > \frac{2}{1-\delta_u}$. This is key to prove the global existence result for small data in Section \ref{subsec: global for small data}.

\begin{proposition}
    The weak Stokes operator 
    \begin{equation*}
        A_w \colon \prescript{}{0}{H}^{\frac{3}{2}-\delta_u}_\sigma(\mathcal{D}) 
\to \prescript{}{0}{H^{-\frac{1}{2}-\delta_u}_\sigma}(\mathcal{D})
    \end{equation*}
    admits a bounded $\mathcal{H}^\infty$-calculus on $\prescript{}{0}{}H^{-\frac{1}{2}-\delta_u}_\sigma (\mathcal{D})$ with angle $\phi^\infty_{A_w} = 0$.
    \label{prop: H infty calculus weak stokes operator}
\end{proposition}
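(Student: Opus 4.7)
The plan is to transfer the bounded $\mathcal{H}^\infty$-calculus of the classical Stokes operator $A$ on $X_{0}:=L^{2}_{\sigma}(\mathcal{D})$ down the extrapolation scale, exactly in the spirit of Pr\"uss and Wilke \cite[Section~5]{Pruss2018}. First I would set up the extrapolation-interpolation Hilbert scale $(X_{\alpha})_{\alpha\in\mathbb{R}}$ attached to $A$: since $A$ is strictly positive on a bounded Dirichlet domain and admits a bounded $\mathcal{H}^\infty$-calculus with angle zero on $X_{0}$, the fractional powers $A^{\alpha}$ are well defined for every $\alpha\in\mathbb{R}$, and I set $X_{\alpha}=D(A^{\alpha})$ (graph norm) for $\alpha\geq 0$ and $X_{\alpha}$ to be the completion of $X_{0}$ with respect to $\|A^{\alpha}\cdot\|_{X_{0}}$ for $\alpha<0$, the latter being isomorphic to $(X_{-\alpha})'$ via the $L^{2}$-duality. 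The realisation $A_{\alpha}$ of $A$ in $X_{\alpha}$ has domain $X_{\alpha+1}$, and it is a classical fact that this similarity transformation by $A^{\alpha}$ preserves the $\mathcal{H}^\infty$-calculus together with its angle: $A_{\alpha}$ still admits a bounded $\mathcal{H}^\infty$-calculus on $X_{\alpha}$ with $\phi^{\infty}_{A_{\alpha}}=0$.

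Next, I would identify the abstract scale with the concrete Bessel potential scale. Using the characterisation $D(A^{\beta})=\prescript{}{0}H^{2\beta}_{\sigma}(\mathcal{D})$ recalled in Section~\ref{subsec: notations and functional setting}, the choice $\beta_{0}:=\tfrac{3}{4}-\tfrac{\delta}{2}\in(\tfrac{1}{2},1)$ (valid for $\delta$ small) yields
\begin{equation*}
X_{\beta_{0}}=\prescript{}{0}H^{\frac{3}{2}-\delta}_{\sigma}(\mathcal{D}),
\qquad
X_{\beta_{0}-1}=X_{-\frac{1}{4}-\frac{\delta}{2}}=\prescript{}{0}H^{-\frac{1}{2}-\delta}_{\sigma}(\mathcal{D}),
\end{equation*}
the second identity being obtained from the dual definition of $\prescript{}{0}H^{s}_{\sigma}(\mathcal{D})$ for $s<0$ given in the same section. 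Consequently $A_{-\frac{1}{4}-\frac{\delta}{2}}$ is a well-defined operator from $\prescript{}{0}H^{\frac{3}{2}-\delta}_{\sigma}(\mathcal{D})$ into $\prescript{}{0}H^{-\frac{1}{2}-\delta}_{\sigma}(\mathcal{D})$ with a bounded $\mathcal{H}^\infty$-calculus of angle zero. To conclude I must match this operator with the form operator $A_{w}$ defined in \eqref{eq: definition weak stokes operator}: for $u,v\in D(A^{1/2})=H^{1}_{0}(\mathcal{D})\cap L^{2}_{\sigma}(\mathcal{D})$ one has
\begin{equation*}
(Au,v)_{L^{2}(\mathcal{D})}=(A^{1/2}u,A^{1/2}v)_{L^{2}(\mathcal{D})}=\int_{\mathcal{D}}\nabla u\cdot\nabla v\,dx,
\end{equation*}
since $A=-P\Delta$ and $P$ acts as the identity on the second slot when it is solenoidal; extending this identity by density and duality to the pairing $\prescript{}{0}H^{\frac{3}{2}-\delta}_{\sigma}(\mathcal{D})\times\prescript{}{0}H^{\frac{1}{2}+\delta}_{\sigma}(\mathcal{D})$ identifies the extrapolated operator with $A_{w}$.

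The main obstacle I foresee is the careful identification of the negative-order extrapolation space with the concrete solenoidal space $\prescript{}{0}H^{-\frac{1}{2}-\delta}_{\sigma}(\mathcal{D})$: one must verify that the $L^{2}$-based duality used in the Hilbert-scale construction is compatible with the intrinsic dual $(\prescript{}{0}H^{\frac{1}{2}+\delta}(\mathcal{D})\cap L^{2}_{\sigma}(\mathcal{D}))'$ used in the paper, and keep the fractional exponents $\beta_{0}$ and $1-\beta_{0}$ strictly inside the interval $(\tfrac{1}{2},1)$ respectively $(0,\tfrac{1}{2})$, so as to avoid the Lions-Magenes endpoint $\alpha=\tfrac{1}{4}$ where the Bessel potential scale ceases to be self-dual. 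This is precisely the reason why the statement is restricted to sufficiently small $\delta>0$, and once these identifications are in place the $\mathcal{H}^{\infty}$-calculus of $A$ on $L^{2}_{\sigma}(\mathcal{D})$ passes automatically to $A_{w}$ with the same angle zero.
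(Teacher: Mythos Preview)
Your proposal is correct and follows essentially the same approach as the paper: both transfer the bounded $\mathcal{H}^\infty$-calculus of the Stokes operator $A$ along its interpolation-extrapolation scale to the level $-\tfrac{1}{4}-\tfrac{\delta}{2}$, identify that level with $\prescript{}{0}H^{-\frac{1}{2}-\delta}_{\sigma}(\mathcal{D})$ via duality and the characterisation $D(A^{\beta})=\prescript{}{0}H^{2\beta}_{\sigma}(\mathcal{D})$, and then match the extrapolated operator with $A_{w}$ by density of $D(A)$ in $X_{1-\theta}$. The only cosmetic difference is that the paper phrases the extrapolation through Amann's dual-scale machinery $(X_{\alpha}^{\sharp},A_{\alpha}^{\sharp})$ from \cite{Amann1995}, whereas you use the equivalent similarity-by-$A^{\alpha}$ description; in the present self-adjoint Hilbert setting these coincide.
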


\begin{proof}
Let $A_0 = A$ and $X_0 =L^2_\sigma(\mathcal{D})$. By \cite[Theorem V.1.5.1 and Theorem 1.5.4]{Amann1995}, it follows that the pair $(X_0, A_0)$ generates an interpolation-extrapolation scale $(X_\alpha, A_\alpha)$ with respect to the complex interpolation functor. Note that for $\alpha\in (0,1)$, $A_\alpha$ is the $X_\alpha$-realization of $A_0$ (the restriction of $A_0$ to $X_\alpha$) and 
\[
X_\alpha=D(A_0^\alpha).
\]
Let $X_0^\sharp:=(X_0)'$ and $A_0^\sharp:=(A_0)'$ with $D(A_0^\sharp)=:X_1^\sharp$. Then $(X_0^\sharp,A_0^\sharp)$ generates an interpolation-extrapolation scale $(X_\alpha^\sharp,A_\alpha^\sharp)$, the dual scale, and by \cite[Theorem V.1.5.12]{Amann1995}, it holds that
\[
(X_\alpha)'=X^\sharp_{-\alpha}\quad\text{and}\quad (A_\alpha)'=A^\sharp_{-\alpha}
\]
for $\alpha\in \mathbb{R}$.  Further, the domain of the fractional power of the Stokes operator can be characterised as
\[
X_{\alpha} = D((-\Delta)^\alpha)  \cap L^2_\sigma(\mathcal{D}), \quad \alpha \in (0,1),
\]
see \cite{Giga1985}. In the particular case  of
\[
\alpha = 1- \theta  = \frac{3}{4}- \frac{\delta_u}{2},
\]
then from the characterisation of the fractional powers of the Dirichlet Laplacian, we get
\[
X_{1- \theta } = \left \lbrace u \in H^{\frac{3}{2}-\delta_u} (\mathcal{D}) \ : \ u |_{\partial \mathcal{D}} = 0 \right \rbrace  \cap L^2_\sigma (\mathcal{D}),
\]
where $u|_{\partial \mathcal{D}}$ is understood in trace-sense. Hence, the operator $A_{-\theta}$ from the scale $(X_\alpha, A_\alpha)$, $\alpha \in \mathbb{R},$ is given by
\[
 A_{- \theta} \colon  X_{1-\theta } \to X_{- \theta },
\]
where by reflexivity $X_{-\theta} = \left(X_\theta^\sharp  \right)'$. Since $A_0^\sharp \in \mathcal{H}^\infty(X_0^\sharp)$, we have
\[
X^\sharp_\theta  = D((A_0^\sharp)^\theta) = [X_0^\sharp, X_1^\sharp  ]_\theta = \left \lbrace  u \in H^{\frac{1}{2}+\delta_u}(\mathcal{D}) \ : \ u|_{\partial \mathcal{D}} = 0 \right \rbrace  \cap L^2_\sigma (\mathcal{D}).
\]
Moreover, we have $A_{-\theta} = (A^\sharp_\theta)'$. Since $A_0^\sharp$ admits a bounded $\mathcal{H}^\infty$-calculus on $X_0^\sharp$ with angle $0$, by duality the operator $ A_{-\theta} \colon X_{1- \theta}  \to X_{-\theta}$ has a bounded $\mathcal{H}^\infty $-calculus with $\mathcal{H}^\infty$-angle $\phi^\infty_{A_{-\theta }} = 0.$

It remains to identify $A_{-\theta}$ with the weak Stokes operator $A_w$. Since $A_{-\theta}$ is the closure of $A_0$ in $X_{-\theta}$, it coincides with $A_0$ on the dense subspace $X_1$. For any $u \in X_1$ and $v \in X_1^\sharp$, we have via integration by parts
\[
\langle A_{-\theta} u, v\rangle =  ( A_0 u, v )_{L^2(\mathcal{D})}  = \int_{\mathcal{D}} \nabla u \cdot \nabla v \, dx.
\]
Since $X_1^\sharp$ is dense in $X_\theta^\sharp$, this identity extends to all $v \in X_\theta^\sharp$ by interpreting the integral as duality pairing. Note that for $u \in X_{1-\theta}$ and $v \in X_\theta^\sharp$, the gradients satisfy $\nabla u \in H^{\frac{1}{2}-\delta_u}(\mathcal{D})$ and $\nabla v \in H^{-\frac{1}{2}+\delta_u}(\mathcal{D})$. Thus, the identity extends continuously with respect to $u$ to the duality pairing
\[
\langle A_{-\theta} u, v\rangle  =  \langle \nabla u, \nabla v \rangle_{{H^{\frac{1}{2}-\delta_u } }, {H^{- \frac{1}{2}+ \delta_u} }}.
\]
Since $X_1$ is dense in $X_{1-\theta}$, this identity holds for all $u \in X_{1-\theta} = \prescript{}{0}{H}^{\frac{3}{2}-\delta_u}_\sigma (\mathcal{D})$ and $v \in X_\theta^\sharp = \prescript{}{0}{H}^{\frac{1}{2}+\delta_u}_\sigma (\mathcal{D})$. Therefore, the abstract operator $A_{-\theta}$ coincides with the weak Stokes operator $A_w$ defined in \eqref{eq: definition weak stokes operator}, and $A_w$ inherits the bounded $\mathcal{H}^\infty$-calculus.
\end{proof}

The first result that we recall is the maximal regularity of the weak Stokes operator, which is a consequence of the $\mathcal{H}^\infty$-calculus recalled above, see \cite[Section II.4]{Pruss2016}.
\begin{proposition}[Maximal regularity for the weak-Stokes operator]
Let $T>0$ be a given time, $ J = (0,T)$, $\delta_u \in (0,1)$. Consider the problem 
\begin{equation}
\left \lbrace
    \begin{aligned}
        \partial_t z+ A_w  z &= g, & \quad &t \in  (0,T), \\
        z|_{t = 0} &= z_0. & 
    \end{aligned}
    \right.
    \label{eq: stokes equation negative space}
\end{equation}
\begin{enumerate}
    \item[(i)] If $g \in L^p (J; \prescript{}{0}H^{-\frac{1}{2}-\delta_u}_\sigma (\mathcal{D}))$ and $z_0 \in V_{p}^{\delta_u}$, then there exists a unique solution
    \[
    z \in \mathbb{E}_{T,p}^{\delta_u} = W^{1,p}(J; \prescript{}{0}H^{-\frac{1}{2}-\delta_u}_\sigma(\mathcal{D})) \cap L^p(J; \prescript{}{0}H^{\frac{3}{2}-\delta_u}_\sigma(\mathcal{D}))
    \]
    of \eqref{eq: stokes equation negative space}.
    \item[(ii)] There exists $C = C(p,\delta_u, T) >0$ such that
    \begin{equation}
        \norm{z}_{\mathbb{E}_{T,p}^{\delta_u}} \leq C \left( \norm{g}_{L^p(J; H^{-\frac{1}{2}-\delta_u}(\mathcal{D}))} + \norm{z_0}_{V_{p}^{\delta_u}} \right).
    \end{equation}
\end{enumerate}
\label{prop: max regularity stokes operator negative space}
\end{proposition}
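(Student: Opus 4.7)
The strategy is to deduce this maximal regularity statement directly from the bounded $\mathcal{H}^\infty$-calculus of $A_w$ established in the preceding proposition, via the abstract theory of Kalton--Weis / Weis (see e.g.\ Kunstmann--Weis or Prüss--Simonett). The ground space $\prescript{}{0}{H}^{-\frac{1}{2}-\delta}_\sigma(\mathcal{D})$ is a Hilbert space (defined as a dual of a closed subspace of a Bessel-potential space), hence UMD. On a UMD space, an operator admitting a bounded $\mathcal{H}^\infty$-calculus with angle $\phi^\infty_{A_w} = 0 < \pi/2$ is $\mathcal{R}$-sectorial of angle $< \pi/2$; by Weis' characterization, this is equivalent to maximal $L^p$-regularity of $A_w$ on the finite interval $J = (0,T)$, for every $p \in (1,\infty)$.

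Concretely, I would proceed as follows. First, reduce \eqref{eq: stokes equation negative space} to two subproblems via linearity: the inhomogeneous problem with zero initial data ($z_0 = 0$) and the homogeneous problem with initial data $z_0 \in V_p$. For the inhomogeneous part, the maximal $L^p$-regularity of $A_w$ gives existence, uniqueness and the bound
\[
\norm{z}_{H^{1,p}(J;\prescript{}{0}H^{-\frac{1}{2}-\delta}_\sigma(\mathcal{D}))} + \norm{A_w z}_{L^p(J;\prescript{}{0}H^{-\frac{1}{2}-\delta}_\sigma(\mathcal{D}))} \leq C \norm{g}_{L^p(J;\prescript{}{0}H^{-\frac{1}{2}-\delta}_\sigma(\mathcal{D}))},
\]
with a constant independent of $g$ (and, on a bounded interval, uniformly controlled in $T$ once we shift, if needed, by a sufficiently large multiple of the identity and absorb). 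Since $D(A_w) = \prescript{}{0}{H}^{\frac{3}{2}-\delta}_\sigma(\mathcal{D})$ carries an equivalent graph norm (again a consequence of the $\mathcal{H}^\infty$-calculus, together with the characterization of fractional power domains used to build the interpolation--extrapolation scale), this is exactly the norm of $\mathbb{E}_{T,p}$.

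For the homogeneous problem, the solution is simply $z(t) = e^{-t A_w} z_0$. The optimal regularity class for this semigroup orbit, namely $\mathbb{E}_{T,p}$, has trace space at $t = 0$ given by the real interpolation space
\[
\bigl( \prescript{}{0}{H}^{-\frac{1}{2}-\delta}_\sigma(\mathcal{D}),\; \prescript{}{0}{H}^{\frac{3}{2}-\delta}_\sigma(\mathcal{D}) \bigr)_{1-\frac{1}{p},\,p} = V_p,
\]
by the classical Lions--Peetre trace theorem; this is precisely the definition of $V_p$ given in Section~\ref{subsec: notations and functional setting} and guarantees the embedding $\mathbb{E}_{T,p} \hookrightarrow C(\overline{J}; V_p)$, which simultaneously makes the initial condition meaningful and yields the bound $\norm{e^{-\cdot A_w} z_0}_{\mathbb{E}_{T,p}} \leq C \norm{z_0}_{V_p}$. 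Combining the two bounds yields (ii), and uniqueness follows from linearity together with the fact that the semigroup is analytic.

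I do not expect a serious obstacle here: essentially all the analytic work has already been carried out when transferring the $\mathcal{H}^\infty$-calculus of the strong Stokes operator to $A_w$ through the Amann interpolation--extrapolation scale. The only care needed is bookkeeping, namely verifying that the domain of $A_w$ as defined through the duality pairing in \eqref{eq: definition weak stokes operator} coincides with the extrapolation-scale domain $X_{1-\theta}$, so that the trace space computation indeed produces $V_p$ as stated; this identification was done in the proof of the previous proposition.
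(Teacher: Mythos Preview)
Your proposal is correct and matches the paper's approach: the paper does not give a detailed proof but simply states that maximal regularity is ``a consequence of the $\mathcal{H}^\infty$-calculus recalled above'' and refers to \cite[Section~II.4]{Pruss2016}, which is precisely the abstract Weis/Kalton--Weis machinery you invoke. Your write-up merely unpacks what that citation contains (UMD ground space, $\mathcal{R}$-sectoriality from bounded $\mathcal{H}^\infty$-calculus of angle $<\pi/2$, trace space $V_p$ via Lions--Peetre), so there is no substantive difference.
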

Second, we recall the mixed derivative theorem, see e.g. \cite[Corollary 4.5.10]{Pruss2016}.
\begin{lemma}[Mixed derivative]
    Let $T>0$ be a given time and $J = (0,T)$. Then
    \begin{equation}
    \mathbb{E}_{T,p}^{\delta_u} =W^{1,p}(J; \prescript{}{0}H^{-\frac{1}{2}-\delta_u}_\sigma (\mathcal{D})) \cap L^p(J; \prescript{}{0}H^{\frac{3}{2}-\delta_u}_\sigma(\mathcal{D})) \hookrightarrow H^{r, p} (J; D(A_w^{1-r})),
        \label{eq:mixed derivative theorem embedding}
    \end{equation}
    for any $r \in [0,1].$
\label{lem: mixed derivative for our case}
\end{lemma}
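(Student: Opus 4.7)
The plan is to invoke the classical mixed-derivative theorem (e.g., \cite[Corollary~4.5.10]{Pruss2016}) directly, since all its ingredients are already at hand. Writing $X_0 = \prescript{}{0}H^{-\frac{1}{2}-\delta}_\sigma(\mathcal{D})$ and $X_1 = D(A_w) = \prescript{}{0}H^{\frac{3}{2}-\delta}_\sigma(\mathcal{D})$, the abstract theorem requires three ingredients: (a) $X_0$ is a UMD space; (b) $A_w$ admits a bounded $\mathcal{H}^\infty$-calculus on $X_0$ with $\mathcal{H}^\infty$-angle strictly less than $\pi/2$; and (c) the time-derivative $\partial_t$ on $L^p(J;X_0)$ (with zero trace at $t=0$) is sectorial with bounded $\mathcal{H}^\infty$-calculus of angle $\pi/2$, and the sum of the $\mathcal{H}^\infty$-angles of $A_w$ and $\partial_t$ is strictly less than $\pi$.

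For (a), $X_0$ is a closed subspace of $H^{-\frac{1}{2}-\delta}(\mathcal{D})$ via the Helmholtz projection $P$, and as such is itself a Hilbert space, which is trivially UMD. For (b), the preceding proposition establishes precisely that $A_w$ enjoys a bounded $\mathcal{H}^\infty$-calculus with $\mathcal{H}^\infty$-angle $\phi^\infty_{A_w} = 0$. For (c), it is classical that on a bounded interval $J=(0,T)$ the operator $\partial_t$ endowed with zero trace at $t=0$ has bounded $\mathcal{H}^\infty$-calculus of angle exactly $\pi/2$ on $L^p(J;X_0)$; the angle-sum condition $0 + \pi/2 < \pi$ is then satisfied automatically.

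With the three hypotheses in place, the mixed-derivative theorem yields
$$
H^{1,p}(J; X_0) \cap L^p(J; X_1) \hookrightarrow H^{r,p}\bigl(J;\, [X_0, X_1]_{1-r}\bigr),
$$
for every $r \in [0,1]$. The identification $[X_0, X_1]_{1-r} = D(A_w^{1-r})$, up to equivalence of norms, is a further consequence of the bounded $\mathcal{H}^\infty$-calculus of $A_w$: it entails bounded imaginary powers, so that the complex interpolation scale coincides with the fractional-power scale (see, e.g., \cite[Theorem~4.5.4]{Pruss2016}). This is exactly the content of \eqref{eq:mixed derivative theorem embedding}.

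Because the argument is essentially a citation supported by results already proved in the preceding subsection, I do not expect any substantive obstacle. The only point that must be checked carefully is that the hypotheses of the abstract theorem transfer cleanly from the strong setting to the weak Stokes operator $A_w$ — but this is immediate here, since $X_0$ is Hilbert (hence UMD), the $\mathcal{H}^\infty$-angle of $A_w$ is zero, and the identification of fractional-power and complex-interpolation scales follows from the $\mathcal{H}^\infty$-calculus.
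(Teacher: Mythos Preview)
Your proposal is correct and follows exactly the same approach as the paper, which simply cites \cite[Corollary~4.5.10]{Pruss2016} without further argument. You have merely spelled out the verification of the hypotheses (UMD, $\mathcal{H}^\infty$-calculus with angle zero, identification of the fractional-power and complex-interpolation scales), all of which are already established in the preceding subsection.
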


We conclude with an auxiliary result, which is based on Lemma \ref{lem: mixed derivative for our case}, for the estimate of the non-linear convective term of the NSEs \eqref{eq: NSEs with force}. 
\begin{lemma}
Let $T>0$ be a given time, $\delta_u \in (0, 1)$ and set  $J = (0,T)$. If $ p > \frac{2}{1-\delta_u}$, then there exists $C = C(p, \delta_u, T)>0$ such that
\[
\norm{ P (u  \cdot \nabla v) }_{L^p(J; \prescript{}{0}H^{-\frac{1}{2}-\delta_u}_{\sigma} (\mathcal{D}))} \leq C   \norm{u}_{\mathbb{E}_{T,p}^{\delta_u}}  \norm{v}_{\mathbb{E}_{T,p}^{\delta_u}}
\]
for any $u,v \in \mathbb{E}_{T,p}^{\delta_u}$, where $P: H^{- \frac{1}{2}- \delta_u} (\mathcal{D}) \to \prescript{}{0}{H^{- \frac{1}{2}- \delta_u}_{\sigma}(\mathcal{D})}$ is the extension of the Helmholtz projection. 
    \label{lemma: estimate convective term}
\end{lemma}
\begin{proof}
    Assume that $s_1,s_2\in \mathbb{R}$ are such that the following pointwise multiplication is bilinear and continuous
    \begin{equation}
    \begin{aligned}
            W^{s_1,2}(\mathcal{D}) \times W^{s_2,2}(\mathcal{D}) &\hookrightarrow W^{-\frac{1}{2}- \delta_u,2  }(\mathcal{D}) \\
            (f,g) &\mapsto f  g.
    \end{aligned}
        \label{eq: a lemma stima termine convettivo}
    \end{equation}
    Combining the previous multiplication embedding with the boundedness of $P$ on $H^{- \frac{1}{2}-\delta_u}(\mathcal{D})$ and the norm equivalence $H^s (\mathcal{D}) \cong W^{s,2}(\mathcal{D})$, there exists $C = C( s_1,s_2, \delta_u)>0$ such that
    \[
    \norm{P(u \cdot \nabla v)}_{\prescript{}{0}H^{- \frac{1}{2}-\delta_u}_\sigma (\mathcal{D})} \leq C \norm{u}_{W^{s_1,2} (\mathcal{D})}  \norm{\nabla v}_{W^{s_2,2} (\mathcal{D})} \leq C \norm{u}_{W^{s_1,2} (\mathcal{D})}  \norm{v}_{W^{1+s_2,2} (\mathcal{D})} . 
    \]
    Considering the $L^p$ norm in time and applying H\"older's inequality, we obtain
    \begin{equation*}
        \begin{split}
             \norm{P(u \cdot \nabla v)}_{L^p(J; \prescript{}{0}H^{- \frac{1}{2}-\delta_u }_\sigma (\mathcal{D}))} &\leq C \norm{u \cdot \nabla v}_{L^p(J; H^{- \frac{1}{2}-\delta_u } (\mathcal{D}))}\\
             &\leq C \norm{u}_{L^\infty (J; W^{s_1,2} (\mathcal{D}))}  \norm{v}_{L^p(J;W^{1+s_2,2}(\mathcal{D}))}.
        \end{split}
    \end{equation*}
    Assuming that
    \begin{equation}
            \mathbb{E}_{T,p}^{\delta_u} \hookrightarrow L^\infty (J; H^{s_1} (\mathcal{D}))
            \label{eq: b lemma stima termine convettivo}
    \end{equation}
    and
    \begin{equation}
        \mathbb{E}_{T,p}^{\delta_u} \hookrightarrow L^p (J; H^{1+s_2} (\mathcal{D})),
        \label{eq: c lemma stima termine convettivo}
    \end{equation}
    we conclude, from the norm equivalence $H^s(\mathcal{D}) \cong W^{s,2}(\mathcal{D})$, that
    \[
\norm{P(u \cdot  \nabla v)}_{L^p(J; \prescript{}{0}H^{-\frac{1}{2}-\delta_u}_\sigma (\mathcal{D}))} \leq C  \norm{u}_{\mathbb{E}_{T,p}^{\delta_u}} \norm{v}_{\mathbb{E}_{T,p}^{\delta_u}}.
\]
It remains to check that \eqref{eq: a lemma stima termine convettivo}, 
\eqref{eq: b lemma stima termine convettivo} and \eqref{eq: c lemma stima termine convettivo} hold.
First, using Lemma~\ref{lem: mixed derivative for our case}, it holds 
\[
\mathbb{E}_{T,p}^{\delta_u} \hookrightarrow H^{r, p}(J; D(A_w ^{1-r}  )) ,
\]
for any $r \in [0,1]$. Second, using the notation of the proof of Proposition \ref{prop: H infty calculus weak stokes operator}, recall that the pair $(X_0,A_0)$, where $A_0$ denotes the Stokes operator $A_0 = A$, and $X_0 =L^2_\sigma (\mathcal{D})$,  generates the interpolation-extrapolation scale $(X_\alpha, A_\alpha)$. Further, the weak Stokes operator is $A_w = A_{-\theta} : X_{1-\theta} \to X_{-\theta}$ and $D(A_w^{1-r}) =X_{-\theta+ (1-r)} = \prescript{}{0}H_\sigma ^{2(1-\theta -r)} (\mathcal{D}) $. Since $1- \theta = \frac{3}{4}- \frac{\delta_u}{2}$ (see the proof of Proposition \ref{prop: H infty calculus weak stokes operator}), we have $D(A_w^{1-r}) = \prescript{}{0} H_{\sigma}^{\frac{3}{2}- \delta_u - 2r} (\mathcal{D})$, and in conclusion
\[
\mathbb{E}_{T,p}^{\delta_u} \hookrightarrow H^{r, p}(J; D(A_w ^{1-r}  )) 
\hookrightarrow H^{r, p}\bigl(J; H^{\frac{3}{2}- \delta_u - 2r} (\mathcal{D})\bigr).
\]
Further, it can be checked that
\eqref{eq: b lemma stima termine convettivo} holds if
\begin{equation}
    \left \lbrace
    \begin{aligned}
         r &> \frac{1}{p}, \\
         \frac{3}{2}- \delta_u - 2r &\geq s_1,
    \end{aligned}
    \right.
    \label{eq: auxiliary system}
\end{equation}
is satisfied for some $r \in [0,1]$. On the other hand, 
\eqref{eq: c lemma stima termine convettivo} holds if $s_2 \leq \frac{1}{2}- \delta_u$. Lastly, choosing
\[
s_1 :=  \frac{3}{2}- \delta_u - 2r, \quad s_2:= \frac{1}{2}-\delta_u,
\]
the pointwise multiplication embedding \eqref{eq: a lemma stima termine convettivo} holds, thanks to \cite[Theorem 8.2]{Behzadan} if $\delta_u \in (0, \frac{1}{2})$ and \cite[Theorem 8.1]{Behzadan} if $\delta_u \in [\frac{1}{2}, 1)$, if $r < \frac{1-\delta_u}{2}$. Thus, we can find $r \in (0,1)$ such that \eqref{eq: a lemma stima termine convettivo} and \eqref{eq: auxiliary system} hold if
\[
\frac{1}{p}< \frac{1-\delta_u}{2},
\]
which is true thanks to our hypothesis $p> \frac{2}{1-\delta_u}$. This completes the proof.
\end{proof}

\subsection{Global for small data for the 3D Navier-Stokes problem}
\label{subsec: global for small data}
The auxiliary results presented in the previous section will be employed to demonstrate Theorem \ref{thm: global for small data NSEs with force}, i.e. the global existence result for small data for the 3D Navier-Stokes problem \eqref{eq: NSEs with force}, within the weak setting for the Stokes operator $A_w$. In particular, if we assume that the initial condition $u_0$ and the forcing term $f^\theta $ are sufficiently small, we show that the solution, in the sense of Definition \ref{def: mild solution weak sense 3D NSEs}, exists on the whole time interval $[0,T]$.

\begin{proof}[Proof of Theorem \ref{thm: global for small data NSEs with force}]
    Let $v_*$ denote the reference solution for \eqref{eq: NSEs with force}, i.e., the solution to the linear problem
    \begin{equation}
    \left \lbrace
    \begin{aligned}
         \partial_t v_* + A_w  v_* &= Pf^\theta , & \quad & t \in  (0,T),\\
         v_* |_{t = 0} & = u_0, 
    \end{aligned}
    \right.
        \label{eq: reference equation}
    \end{equation}
    and define the ball 
    \[
    \mathbb{B}\left( v_*, \frac{\eta}{2} \right) = \left \lbrace  v' \in \mathbb{E}_{T,p}^{\delta_u} \; : \; \norm{v' - v_*}_{\mathbb{E}_{T,p}^{\delta_u}} \leq \frac{\eta}{2} \right \rbrace \subset \mathbb{E}_{T,p}^{\delta_u}.
    \]
    Consider the map
    \begin{equation*}
        \begin{aligned}
            \Gamma \colon \mathbb{B}\left(v_*, \frac{\eta}{2}  \right ) \subset \mathbb{E}_{T,p}^{\delta_u}&\to  \mathbb{E}_{T,p}^{\delta_u}\\
            \Gamma(v') &= v,
        \end{aligned}
    \end{equation*}
    where $v$ is the unique solution of
    \begin{equation*}
\left \lbrace
    \begin{aligned}
         \partial_tv+ A_w  v &= P(- v' \cdot  \nabla v' +f^\theta), & t \in  (0,T),\\
        v|_{t = 0} &= u_0. & 
    \end{aligned}
    \right.
\end{equation*}
We will show that, there exists $\tilde{\eta } = \tilde\eta (p, \delta_u,T)>0$ such that for any $\eta \in (0, \tilde \eta )$, the following two conditions hold
\begin{enumerate}
    \item[(i)] $\norm{\Gamma (v')- v_*}_{\mathbb{E}_{T,p}^{\delta_u}} \leq \frac{\eta}{2} $ for all $v' \in \mathbb{B}( v_*, \frac{\eta}{2} )$,
    \item[(ii)] $\norm{\Gamma(v_1')- \Gamma(v_2')}_{\mathbb{E}_{T,p}^{\delta_u}} \leq d \cdot \norm{v_1' - v_2'}_{\mathbb{E}_{T,p}^{\delta_u}}$, where $d \in [0,1)$, for all $ v_1', v_2' \in \mathbb{B}(v_*, \frac{\eta}{2} ).$
\end{enumerate}

    (i). Let $v= \Gamma(v')$ and define $w:= v - v_*$, which satisfies
    \begin{equation}
    \left \lbrace
    \begin{aligned}
        \partial_tw +A_w  w &= P(- v' \cdot  \nabla v'), & \quad &t \in  (0,T),\\
         w|_{t = 0} & = 0.
    \end{aligned}
    \right.
    \end{equation}
    By the maximal regularity of the weak Stokes operator (see Proposition \ref{prop: max regularity stokes operator negative space}), Lemma \ref{lemma: estimate convective term} and Jensen's inequality, we obtain
    \begin{equation*}
        \begin{split}
            \norm{w}_{\mathbb{E}_{T,p}^{\delta_u}} \leq C  \norm{P(v' \cdot  \nabla v')}_{L^p(J; \prescript{}{0}H^{-\frac{1}{2}-\delta_u  }_{\sigma}(\mathcal{D}))} \leq C\norm{v'}_{\mathbb{E}_{T,p}^{\delta_u}}^2 \leq 2 C \left( \norm{v'-v_*}^2_{\mathbb{E}_{T,p}^{\delta_u}} + \norm{v_*}_{\mathbb{E}_{T,p}^{\delta_u}}^2 \right),
        \end{split}
    \end{equation*}
    for a positive constant $C = C(p,\delta_u, T)$ and $J = (0,T).$ Further, again by the maximal regularity of the weak Stokes operator, there exists a constant $M=M(p,\delta_u, T)$  such that
    \begin{equation}
    \begin{split}
    \norm{v_*}_{\mathbb{E}_{T,p}^{\delta_u}} \leq M \left( \norm{f^\theta}_{L^p(J;H^{-\frac{1}{2}-\delta_u }(\mathcal{D}))}+ \norm{u_0}_{V_{p}^{\delta_u}} \right)\leq  \frac{\eta}{2} .
    \end{split}
        \label{eq: maximal regularity v_*}
    \end{equation}
    Thus, we have
    \begin{equation}
        \begin{split}
            \norm{v- v_*}_{\mathbb{E}_{T,p}^{\delta_u}} = \norm{w}_{\mathbb{E}_{T,p}^{\delta_u}} &\leq 2C\left( \norm{v'-v_*}^2_{\mathbb{E}_{T,p}^{\delta_u}} + \norm{v_*}_{\mathbb{E}_{T,p}^{\delta_u}}^2 \right)  \leq 2C  \left(\frac{\eta^2}{4} + \frac{\eta^2}{4} \right)  = C \eta^2.
        \end{split}
        \label{eq: norm v - v_*}
    \end{equation}
    By choosing $\eta$ such that
    \[
    \eta^2 \leq \frac{\eta}{2C},
    \]
    we conclude that $\Gamma $ maps $\mathbb{B}\left(v_*, \frac{\eta}{2}\right)$ into itself.
 
    (ii). Let $v_i = \Gamma(v_i')$ for $i =1,2,$ and define $\tilde{w}:= v_1-v_2$, $\tilde{w}' = v_1' - v_2'$. Then, $\tilde{w}$ satisfies
    \begin{equation}
    \left \lbrace
    \begin{aligned}
          \partial_t \tilde w +A_w  \tilde{w} &=P( - v_1' \cdot  \nabla v_1' + v_2'  \cdot \nabla v_2'), & t \in   (0,T),\\
         \tilde{w}|_{t = 0} & = 0.
    \end{aligned}
    \right.
    \end{equation}
    By the maximal regularity of the Stokes operator, Lemma \ref{lemma: estimate convective term}, the definition of $v_i'$ and \eqref{eq: maximal regularity v_*}, we have
    \begin{equation*}
    \begin{split}
        \norm{\Gamma(v_1')- \Gamma(v_2')}_{\mathbb{E}_{T,p}^{\delta_u}} &\leq C \norm{ - \tilde{w}' \cdot \nabla v_1' - v_2' \cdot  \nabla \tilde{w}'}_{L^p(J; H^{- \frac{1}{2}-\delta_u}(\mathcal{D}))} \\
        & \leq C \norm{\tilde{w}'}_{\mathbb{E}_{T,p}^{\delta_u}} \left( \norm{v_1'}_{\mathbb{E}_{T,p}^{\delta_u}} + \norm{v_2'}_{\mathbb{E}_{T,p}^{\delta_u}}\right)\\
        &\leq C \norm{\tilde{w}'}_{\mathbb{E}_{T,p}^{\delta_u}} \left( \eta  + \eta \right)\\
         &= 2C  \norm{v_1' -v_2'}_{\mathbb{E}_{T,p}^{\delta_u}} \eta ,
    \end{split}
    \end{equation*}
    where $C= C( p, \delta_u,T)$ is a positive constant changing from line to line. Up to choosing $\eta$ sufficiently small, (ii) is verified.
    
        In conclusion, by the Banach fixed point theorem, there exists a unique fixed point $v = \Gamma(v) \in \mathbb{B}(v_*, \frac{\eta}{2}).$ Furthermore, by the triangle inequality, \eqref{eq: maximal regularity v_*}, and \eqref{eq: norm v - v_*}, we also obtain
        \begin{equation*}
            \norm{v}_{\mathbb{E}_{T,p}^{\delta_u}} \leq \norm{v-v_*}_{\mathbb{E}_{T,p}^{\delta_u}} + \norm{v_*}_{\mathbb{E}_{T,p}^{\delta_u}} \leq \frac{\eta}{2} + \frac{\eta}{2} = \eta.
        \end{equation*}
        This concludes the proof.
\end{proof}

\section{Well-posedness for the 3D-coupled temperature-velocity model with boundary noise}
\label{sec: well posedness coupled model}

We are now in position to prove the main result of our work.
\begin{proof}[Proof of Theorem \ref{thm: main theorem}]
Fix $T>0$, $s\in(0,\tfrac12)$ and $p>4$. Choose, as in \eqref{eq:param-gamma}, 
\[
\gamma:=\frac{1}{4}+\varepsilon_\gamma, \qquad 0<\varepsilon_\gamma<\min\left \lbrace \frac{1-2s}{4}, \frac{(1-2s)(p-4)}{4p}\right \rbrace ,
\]
and set
\[
a(s,\gamma):=\frac{1}{2}+\gamma+\frac{s}{2}, \qquad \lambda_{\max}:=p(1-a(s,\gamma)),
\qquad \lambda_{\min}:=\frac{1}{2}-s.
\]
Then $\lambda_{\max}>\lambda_{\min}$, thanks to the same computation as in \eqref{eq:lmax-lmin}. Hence the interval $(\lambda_{\min},\lambda_{\max})$ is non-empty.
Thus, we may choose $\lambda \in (\lambda_{\min},\lambda_{\max})$. In particular, as in \eqref{eq:param-lambda}, we pick
\[
\lambda:=\frac{1}{2}-s+\varepsilon_\lambda
\qquad 0 < \varepsilon_\lambda < \min \{ 2s, \lambda_{\max} - (1-2s) \}.
\]
Lastly, as in \eqref{eq:param-bar-deltas}, define the thresholds
\[
\bar{\delta}_\theta:=\min\left \lbrace \lambda, 2\gamma-\frac{1}{2}\right \rbrace , \qquad
\bar{\delta}_u:=\min \left \lbrace 1-\lambda, \lambda_{\max}-\lambda\right \rbrace .
\]
With this choice we have
\[
1-\lambda>\frac{1}{2}-s, \qquad
\lambda_{\max}-\lambda>\frac{1}{2}-s, \qquad\text{hence}\qquad
\bar{\delta}_u>\frac{1}{2}-s.
\]

In preparation for the application of Sobolev embeddings used below and Theorem \ref{thm: global for small data NSEs with force}, we now fix $
\delta_u \in (0,\bar\delta_u)$ and $
0<\delta_\theta < \bar\delta_\theta$
satisfying the compatibility conditions \eqref{eq: compatability conditions}, namely
\[
\delta_u \geq \max\left \lbrace \delta_\theta, \frac{1}{2}-s \right \rbrace ,
\qquad
\delta_u<1-\frac{2}{p}.
\]
These constraints are consistent: indeed $p>4$ implies $1-\frac{2}{p}>\frac{1}{2}$ and, by the choice of $\lambda$, we have $\bar\delta_u>\frac{1}{2}-s$, hence one can pick $\delta_u$ in the non-empty interval
$\bigl[\frac{1}{2}-s,\min\{\bar\delta_u,1-\frac{2}{p}\}\bigr)$ and then choose any $\delta_\theta\in(0,\min\{\bar\delta_\theta,\delta_u\})$.
Finally, note that $\delta_u<1-\frac{2}{p}$ is equivalent to
$p>\frac{2}{1-\delta_u}$, which is the integrability requirement needed to apply
maximal regularity in Theorem~\ref{thm: global for small data NSEs with force}.

Define
\[
    \alpha_\theta = \frac{1 }{4}+ \frac{\delta_\theta}{2},  \qquad
    \beta_\theta =\frac{1}{4}- \frac{\delta_\theta}{4 }.
\]
Let $\tilde\eta=\tilde \eta (p, \delta_u, T)>0 $ and $M =M(p, \delta_u,T)>0$ be the constants given by Theorem \ref{thm: global for small data NSEs with force}. Since $\delta_u  \geq\max\{ \delta_\theta, \frac{1}{2}-s    \}$ thanks to the compatibility conditions \eqref{eq: compatability conditions}, the embeddings
\[
    H^{-2\alpha _\theta }(\mathcal{D}) \hookrightarrow H^{- \frac{1}{2}-\delta_u}   (\mathcal{D}),   \quad W^{s, \frac{6}{5}}(\mathcal{D}) \hookrightarrow H^{s-1}(\mathcal{D}) \hookrightarrow H^{-\frac{1}{2}-\delta_u} (\mathcal{D}),
\]
    hold. Thus, there exists a constant $C_{emb} = C_{emb}(\delta_\theta, \delta_u, s)$ such that for all $g_1 \in H^{- 2\alpha_\theta} (\mathcal{D})$ and $g_2 \in W^{s, \frac{6}{5}} (\mathcal{D})$, we have
    \[
    \| g_1 \|_{H^{- \frac{1}{2}-\delta_u} (\mathcal{D})} \leq C_{emb} \| g_1 \|_{H^{-2\alpha_\theta} (\mathcal{D})}, \quad \| g_2 \|_{H^{-\frac{1}{2}- \delta_u}(\mathcal{D})} \leq C_{emb}
 \| g_2 \|_{W^{s, \frac{6}{5}}(\mathcal{D})}.
 \]
    We define $\widetilde{M}:= \max \{2,M, M T^{1/p}C_{emb} \}$. For any $\eta \in (0, \tilde{ \eta})$, define the stopping time $\tau^\varepsilon \colon \Omega \to [0,T]$ as
    \[
    \tau^\varepsilon := \inf \left \lbrace t \in [0,T] \ : \  \norm{Z^\varepsilon_t}_{H^{-2 \alpha_\theta}(\mathcal{D})}  > \frac{\eta}{8 \widetilde{M}}\right \rbrace,
    \]
    with the understanding that $\tau^\varepsilon = T$ if the previous set is empty. Note that
    \[
    \left \lbrace \omega \ : \ \tau^\varepsilon = T \right \rbrace = \left \lbrace  \omega \ : \ \sup_{0 \leq t \leq T}  \norm{Z^\varepsilon_t}_{H^{-2 \alpha_\theta }(\mathcal{D})}  \leq \frac{\eta}{8 \widetilde{M}}\right \rbrace .
    \]
    From Proposition \ref{prop: regularity for Z_t}, we deduce that
    \[
    \mathbb{P} \left(  \tau^\varepsilon = T\right) \geq 1- \frac{64 \widetilde{M}^2 \varepsilon}{\eta^2} \,  C(\delta_\theta, T) \,  \sum \limits_{k} \lambda_k^2 \norm{(-\Delta)^{\beta_\theta } D e_k }_2^2,
    \]
    where $C= C(\delta_\theta , T)$ is a positive constant. Consider
    \[
    \mathbb{B}_1 := \left \lbrace z \in C(0,\tau^\varepsilon ; W^{s,\frac{6}{5}}(\mathcal{D})) \ : \ \norm{z}_{C(0,\tau^\varepsilon ;W^{s, \frac{6}{5}}(\mathcal{D}))} \leq \frac{\eta }{8 \widetilde{M}} \right \rbrace,
    \]
    and the map
    \begin{equation*}
        \begin{split}
            \Gamma \colon \mathbb{B}_1 &\to C(0,\tau^\varepsilon ;W^{s, \frac{6}{5}}(\mathcal{D})) \\
            \zeta' &\mapsto \zeta^{\varepsilon }
        \end{split}
    \end{equation*}
    where
    \[
    \zeta_{t}^{\varepsilon  } := e^{t  \Delta} \theta_0 - \int_0^{t } e^{(t  -r)\Delta} \left( u^{\varepsilon}_r \cdot \nabla \zeta'_r \right)\, dr - \int_0^{t  } e^{(t  -r) \Delta  }\left( u_r^{\varepsilon } \cdot \nabla Z^\varepsilon_r \right)\, dr, \quad 0 \leq t \leq \tau^\varepsilon .\\
    \]
    Here $u^{\varepsilon} \in \mathbb{E}_{\tau^\varepsilon,p}^{\delta_u} $ is the unique solution of
    \begin{equation*}
        \left \lbrace
        \begin{aligned}
            \partial_t u^{\varepsilon}+ A_w u^{\varepsilon} & =P(- u^{\varepsilon} \cdot  \nabla u^{\varepsilon}   - \theta_t^{ \varepsilon,\prime}    e_3), \quad & t \in  (0, \tau ^\varepsilon),\\
            u^\varepsilon|_{t = 0}  & = u_0,
        \end{aligned}
        \right.
    \end{equation*}
    where
    \[
    \theta_t^{ \varepsilon  ,\prime} := Z^\varepsilon_{t \wedge \tau^\varepsilon } + \zeta'_t.
    \]
    Note that $u^{\varepsilon}$ is well defined thanks to Theorem \ref{thm: global for small data NSEs with force}, once we check the corresponding smallness condition on the forcing $-\theta^{\varepsilon,\prime}e_3$. First, by the definition of $\tau^\varepsilon$ we have
    \[
    \sup_{0\le t\le T}\bigl\|Z^\varepsilon_{t\wedge\tau^\varepsilon}\bigr\|_{H^{-2\alpha_{\theta}}(\mathcal D)}
      = \sup_{0\le t\le T}
        \bigl\|Z^\varepsilon_{t\wedge\tau^\varepsilon}\bigr\|_{H^{-\frac12-\delta_{\theta}}(\mathcal D)}
      \le \frac{\eta}{8\widetilde M}.
    \]
    Further, by definition of $\mathbb{B}_1$, we also have
    \[
    \sup_{0 \leq t \leq \tau^\varepsilon} \| \zeta_t' \|_{W^{s, \frac{6}{5}}(\mathcal{D})} \leq \frac{\eta}{8 \widetilde{M}}.
    \]
    Using the embedding constant $C_{emb}$ defined above, we estimate the forcing in $L^p (0, \tau^\varepsilon; H^{-\frac{1}{2} - \delta_u} (\mathcal{D}))$
    \begin{equation*}
        \begin{split}
            \| \theta^{\varepsilon,\prime} \| _{L^p(0,\tau^\varepsilon; H^{-\frac12-\delta_u}(\mathcal D))} &\leq  T^{1/p} \sup_{0 \leq t \leq \tau^\varepsilon} \left(  \| Z_t^\varepsilon \|_{H^{-\frac{1}{2}-\delta_u}(\mathcal{D})} + \|\zeta_t' \|_{H^{-\frac{1}{2}-\delta_u}(\mathcal{D})} \right) \\
            & \leq T^{1/p}C_{emb} \left( \sup_{0 \leq t \leq \tau^\varepsilon}  \| Z_t^\varepsilon \|_{H^{-2 \alpha_\theta }(\mathcal{D})} + \sup_{0 \leq t \leq \tau^\varepsilon} \|\zeta_t' \|_{W^{s, \frac{6}{5}}(\mathcal{D})} \right) \\
            &\leq T^{1/p} C_{emb} ( \frac{\eta}{8 \widetilde{M}}+ \frac{\eta }{ 8 \widetilde{M}} )\\
            &= T^{1/p} C_{emb} \frac{\eta}{4 \widetilde{M}}.
        \end{split}
    \end{equation*}
    Since we defined $\widetilde{M} \geq M T^{1/p} C_{emb},$ we deduce
    \[
    \| \theta^{\varepsilon,\prime} \| _{L^p(0,\tau^\varepsilon; H^{-\frac12-\delta_u}(\mathcal D))}  \leq \frac{\eta}{4 M}.
    \]
    This is exactly the smallness assumption required in Theorem \ref{thm: global for small data NSEs with force}, so $u^\varepsilon$ is well defined. 
    
    To apply the Banach fixed point theorem, we first check that $\Gamma$ maps $\mathbb{B}_1$ into itself. We can use \eqref{eq: estimate tilde tau} by Corollary \ref{cor: estimate tilde zeta} since its hypothesis, namely $\delta_\theta < \bar \delta_\theta $ and $\delta_u < \bar \delta_u$ are satisfied; further, we have the embedding $\mathbb{E}_{\tau^\varepsilon,p }^{\delta_u} \hookrightarrow L^{\frac{p}{\lambda + \delta_u }}(0,\tau ^\varepsilon;  W^{\frac{1}{2}+ \lambda,2}(\mathcal{D})), $ which holds for our fixed $\lambda$ thanks to the assumption $\delta_u<1-\lambda$. Thus, 
    \begin{equation*}
        \begin{split}
            \norm{\zeta^{\varepsilon }}_{C(0, \tau^\varepsilon ; W^{s,\frac{6}{5}}(\mathcal{D}))} &\leq \sup_{0 \leq t \leq \tau^\varepsilon}\norm{e^{t  \Delta }}_{W^{s,\frac{6}{5}}(\mathcal{D})} \norm{\theta_0}_{W^{s, \frac{6}{5}}(\mathcal{D})}\\
            &\quad + C \norm{u^{\varepsilon }}_{\mathbb{E}_{\tau^\varepsilon,p }^{\delta_u}} \left( \norm{\zeta '}_{C(0, \tau^\varepsilon ; W^{s, \frac{6}{5}}(\mathcal{D}))} + \norm{\xi^\varepsilon}_{C(0,\tau^\varepsilon ; L^2(\mathcal{D}))} \right) \\
            & \leq \frac{\eta}{16 \widetilde{M}} +  C \eta \left(\frac{\eta}{8 \widetilde{M}}+\frac{\eta}{8 \widetilde{M}}  \right) \\
            &= \frac{\eta}{16 \widetilde{M}} + \frac{C \eta^2}{4\widetilde{M}}.
        \end{split}
    \end{equation*}
    Thus, if $\eta $ is small enough, $\Gamma(\mathbb{B}_1) \subset \mathbb{B}_1$.
    
    Now we verify the contraction property. Consider $\zeta_i^{\varepsilon} = \Gamma (\zeta_i')$, $i = 1,2$. By the definition of $\Gamma$, for every $t\in[0,\tau^\varepsilon]$ we have
    \begin{equation*}
        \begin{split}
            \zeta_1^{\varepsilon}(t) - \zeta_2^{\varepsilon}(t)
            &= - \int_0^{t} e^{(t-r)\Delta}
                  \Big( u_{1,r}^{\varepsilon} \cdot \nabla \zeta_{1,r}' 
                       - u_{2,r}^{\varepsilon} \cdot \nabla \zeta_{2,r}' \Big)\,dr \\
            &\quad - \int_0^{t} e^{(t-r)\Delta}
                  \Big( u_{1,r}^{\varepsilon} - u_{2,r}^{\varepsilon} \Big)
                  \cdot \nabla Z_r^\varepsilon \,dr.
        \end{split}
    \end{equation*}
    We split the first integral by adding and subtracting the same quantity and then, using Corollary \ref{cor: csi continuo} and the bilinear estimate proved in \eqref{eq: bound norm u tau in Lp step one fixed point tau}, we obtain
    \begin{equation*}
        \begin{split}
            \norm{\zeta_{ 1}^{\varepsilon }- \zeta_{ 2}^{\varepsilon }}_{C(0,\tau^\varepsilon ; W^{s, \frac{6}{5}})} 
            &\leq C\biggl(
               \|u^{\varepsilon}_1\|_{\mathbb{E}_{\tau^\varepsilon,p }^{\delta_u}}
                 \|\zeta _1' - \zeta _2'\|_{C(0,\tau^\varepsilon ;W^{s,\frac{6}{5}})}\\
            &\quad + \|u^{\varepsilon}_1 - u^{\varepsilon}_2\|_{\mathbb{E}_{\tau^\varepsilon,p }^{\delta_u}}
                    \|\zeta _2'\|_{C(0,\tau^\varepsilon ;W^{s,\frac{6}{5}})} \\
            &\quad + \|u^{\varepsilon}_1 - u^{\varepsilon}_2\|_{\mathbb{E}_{\tau^\varepsilon,p }^{\delta_u}}
                    \|Z^\varepsilon\|_{C(0,\tau^\varepsilon;H^{-2\alpha_\theta})}
            \biggr).
        \end{split}
    \end{equation*}
    Let $w^{\varepsilon}:= u_1^{\varepsilon} - u_2^{\varepsilon}$. It satisfies 
    \begin{equation*}
        \left \lbrace
        \begin{aligned}
             \partial_tw^{\varepsilon} + A_w w^{\varepsilon} &=P( -  u_1^{\varepsilon}  \cdot \nabla u^{\varepsilon}_1 + u_2^{\varepsilon} \cdot  \nabla u^{\varepsilon}_2 - \left(\zeta_{t,1}' - \zeta_{t,2}' \right)    e_3), \quad &t \in  (0, \tau^\varepsilon ),\\
            w^\varepsilon|_{t = 0}  & = 0.
        \end{aligned}
        \right.
    \end{equation*}
    By the maximal regularity of the weak Stokes operator (Proposition \ref{prop: max regularity stokes operator negative space}) and Lemma \ref{lemma: estimate convective term}, there exists $C= C(p, \delta_u,T)$ such that
\begin{equation*}
    \begin{split}
        \norm{w^{\varepsilon }}_{\mathbb{E}_{\tau^\varepsilon,p}^{\delta_u}} &\leq C \left( \norm{w^{\varepsilon} \cdot  \nabla u_1^{\varepsilon}}_{L^p(0,\tau^\varepsilon;H^{-\frac{1}{2}-\delta_u}(\mathcal{D}))}+ \norm{u_2^{\varepsilon} \cdot  \nabla w^{\varepsilon}}_{L^p(0,\tau^\varepsilon;H^{-\frac{1}{2}-\delta_u}(\mathcal{D}))} \right. \\
        & \quad \left. +\norm{\zeta'_1- \zeta'_2}_{L^p(0,\tau^\varepsilon; H^{-\frac{1}{2}-\delta_u}(\mathcal{D}))} \right) \\
        &\leq C \left(\norm{w^{\varepsilon}}_{\mathbb{E}_{\tau^\varepsilon,p}^{\delta_u}} \norm{u_1^{\varepsilon}}_{\mathbb{E}_{\tau^\varepsilon,p}^{\delta_u}} +\norm{u_2^{\varepsilon}}_{\mathbb{E}_{\tau^\varepsilon,p}^{\delta_u}}  \norm{w^{\varepsilon}}_{\mathbb{E}_{\tau^\varepsilon,p}^{\delta_u}} +\norm{\zeta'_1- \zeta'_2}_{L^p(0,\tau^\varepsilon; H^{-\frac{1}{2}-\delta_u}(\mathcal{D}))}\right)\\
        & \leq C \left( 2 \eta \norm{w^{\varepsilon}}_{\mathbb{E}_{\tau^\varepsilon,p}^{\delta_u}}+ \norm{\zeta'_1- \zeta'_2}_{L^p(0,\tau^\varepsilon; H^{-\frac{1}{2}-\delta_u}(\mathcal{D}))}\right).
    \end{split}
\end{equation*}
Up to choosing $\eta $ sufficiently small, using the embedding $W^{s, \frac{6}{5}} (\mathcal{D}) \hookrightarrow  H^{s-1}(\mathcal{D}) \hookrightarrow H^{-\frac{1}{2}-\delta_u}(\mathcal{D})$, which holds for $\delta_u \geq \frac{1}{2}-s,$ we obtain
\[
 \norm{w^{\varepsilon }}_{\mathbb{E}_{\tau^\varepsilon,p}^{\delta_u}} \leq  C\norm{\zeta'_1- \zeta'_2}_{L^p(0,\tau^\varepsilon; H^{-\frac{1}{2}-\delta_u}(\mathcal{D}))} \leq C \norm{\zeta'_1- \zeta'_2}_{C(0,\tau^\varepsilon;W^{s, \frac{6}{5}}(\mathcal{D}))}.
\]
Up to renaming $C$, we get 
\begin{equation*}
    \begin{split}
        \norm{\zeta_{ 1}^{\varepsilon }- \zeta_{ 2}^{\varepsilon }}_{C(0,\tau^\varepsilon ; W^{s, \frac{6}{5}}(\mathcal{D}))} &\leq C \left( \eta  \norm{\zeta_1'- \zeta_2'}_{C(0,\tau^\varepsilon;W^{s, \frac{6}{5}}(\mathcal{D}))} + \norm{\zeta_1' - \zeta_2'}_{C(0,\tau^\varepsilon;W^{s, \frac{6}{5}}(\mathcal{D}))} \frac{\eta}{8 \widetilde{M}} \right) \\
        &  = d  \norm{\zeta_1' - \zeta_2'}_{C(0,\tau^\varepsilon;W^{s, \frac{6}{5}}(\mathcal{D}))}
    \end{split}
\end{equation*}
where $d:=  C \eta(1+ \frac{1}{8\widetilde{M}}).$ Thus, $d< 1$ if $\eta$ is small enough.

Lastly, regarding the adaptedness of the solution process, fix a time $t_0 \in [0,T]$. For any $\omega \in \Omega$, the contraction principle applied in the proof above, starting with a deterministic initial condition, generates an adapted sequence $(\tilde{u}^{\varepsilon, t_0}_n (\omega), \tilde{\theta}^{\varepsilon, t_0}_n(\omega))_n$ which converges to a limiting sequence $(\tilde{u}^{\varepsilon,  t_0} (\omega), \tilde{\theta}^{\varepsilon, t_0}(\omega))$. This latter stochastic process is adapted, being the limit of adapted processes. By uniqueness, the restriction to $[0,t_0]$ of the solution $(u^{\varepsilon} (\omega), \tilde{\theta}^{\varepsilon}(\omega))$ needs to coincide with the adapted solution $(\tilde{u}^{\varepsilon,  t_0}, \tilde{\theta}^{\varepsilon, t_0})$. This proves adaptedness. The progressive measurability of the solution can be obtained by a similar reasoning. The proof is thus complete.
\end{proof}

\section*{Acknowledgments}

We thank the two anonymous reviewers for their careful reading of the manuscript and for their valuable comments and suggestions, which helped improve the paper. G.D.S. acknowledges the support from the DFG project FOR~5528. M.L. is supported by the Italian national inter-university PhD course in Sustainable Development and Climate change. M.L. produced this work while attending the PhD programme in
PhD in Sustainable Development And Climate Change at the University School for Advanced Studies IUSS
Pavia, Cycle XXXVIII, with the support of a scholarship financed by the Ministerial Decree no. 351 of 9th
April 2022, based on the NRRP - funded by the European Union - NextGenerationEU - Mission 4 "Education
and Research", Component 1 "Enhancement of the offer of educational services: from nurseries to
universities” - Investment 3.4 “Advanced teaching and university skills”. The authors thank Franco Flandoli and Eliseo Luongo for the fruitful discussion on the topic.

\end{document}